\newtheorem{theorem}{Theorem}[section]
\newtheorem{lemma}[theorem]{Lemma}
\newtheorem{corollary}[theorem]{Corollary}
\newtheorem{proposition}[theorem]{Proposition}
\newtheorem{definition}[theorem]{Definition}
\newtheorem{remark}[theorem]{Remark}
\theoremstyle{definition}
\newtheorem{example}[theorem]{Example}
\newcommand{\beqa}{\begin{eqnarray*}}
\newcommand{\eeqa}{\end{eqnarray*}}
\DeclareMathOperator*{\supp}{supp}
\DeclareMathOperator*{\sing}{sing}
\DeclareMathOperator*{\rank}{rank}
\DeclareMathOperator*{\id}{id}
\DeclareMathOperator*{\Sp}{Sp}
\DeclareMathOperator*{\GL}{GL}
\newcommand{\field}[1]{\mathbb{#1}}
\newcommand{\bR}{\field{R}}        
\newcommand{\bN}{\field{N}}        
\newcommand{\bC}{\field{C}}        
\def\la{\lambda}
\def\cF{\mathcal{F}}              
\def\cS{\mathcal{S}}
\def\cD{\mathcal{D}}
\def\rd{\bR^d}
\def\rdd{{\bR^{2d}}}
\def\<{\left<}
\def\>{\right>}
\def\mv1{M_v^1}
\def\mn{(m,n)}
\def\mn'{(m',n')}
\def\Ren{\mathbb{R}^d}
\def\f{\varphi}
\def\Sn2{S_{2}(L^{2}(\Ren))}
\def\S1{S_{1}(L^{2}(\Ren))}
\def\sig00{\sigma_{0,0}}
\def\la{\langle}
\def\ra{\rangle}
\begin{document}

\begin{abstract} 
Metaplectic operators form a relevant class of operators appearing in different applications, in the present work we study their Schwartz kernels. Namely, diagonality of a kernel is defined by imposing rapid off-diagonal decay conditions, and quasi-diagonality by imposing the same conditions on the smoothing of the kernel through convolution with the Gaussian. Kernels of metaplectic operators are not diagonal. Nevertheless, as we shall prove, they are quasi-diagonal under suitable conditions. Motivation for our study comes from problems in time-frequency analysis, that we discuss in the last section.

\end{abstract}

\title[Metaplectic operators with quasi-diagonal kernels]{Metaplectic operators with quasi-diagonal kernels}

\author{Gianluca Giacchi}
\address{Universit\'a della Svizzera Italiana, IDSIA, Facoltà di Informatica, Via la Santa 1, 6962 Lugano, Switzerland}
\email{gianluca.giacchi@usi.ch}
\author{Luigi Rodino}
\address{Universit\`a di Torino, Dipartimento di Matematica, via Carlo Alberto 10, 10123 Torino, Italy}
\email{luigi.rodino@unibo.it}

\thanks{}
\subjclass[2020]{22E46, 47G30, 47F05, 81S30}
\keywords{ metaplectic operators, diagonal kernel, linear canonical transform, time-frequency analysis, symplectic matrix }
\maketitle

\section{Introduction}
We aim to study the kernel of metaplectic operators, addressing in particular to their behavior on the diagonal. Let us first introduce the notions of diagonal and quasi-diagonal kernel. The so-called kernel theorem of L. Schwartz states that every linear continuous operator $T:\cS(\rd)\to\cS'(\rd)$ can be written in the integral form
\begin{equation}\label{LuigiIntro1}
	Tf(x)=\int_{\rd}k(x,y)f(y)dy, \qquad f\in\cS(\rd),
\end{equation}
the kernel $k$ being in general a distribution, with formal meaning of the integral. Different versions are present in the literature, in different functional frameworks for $f$, see for example \cite{treves2006topological}. Other standard setting for \eqref{LuigiIntro1} is $f\in\mathcal{D}(\Omega)$, with $\Omega$ open subset of $\rd$ and $T:\cD(\Omega)\to\cD'(\Omega)$.

For theoretical and applicative reasons, particular attention is given to the behavior of the kernels $k$ at the diagonal $\Delta=\{x=y\}$. The first striking result in this direction was the celebrated theorem of J. Peetre. A linear operator $T$ preserves the support of a function or distribution $f$, i.e. $\supp(Tf)\subseteq\supp(f)$, if and only if $T$ is a linear partial differential operator, the kernel $k$ satisfying
\begin{equation}\label{LuigiIntro2}
	\supp(k)\subseteq\Delta,
\end{equation}
see \cite{peetre1959une} for a precise functional setting. By replacing supports with singular supports, and assuming
\begin{equation}\label{LuigiIntro3}
	\sing\supp(k)\subseteq\Delta,
\end{equation}
under suitable hypotheses of regularity for $k$, one obtains the so-called pseudo-local property, i.e., $\sing\supp(Tf)\subseteq\sing\supp(f)$. Basic examples satisfying \eqref{LuigiIntro3} are the kernels of pseudo-differential operators, the literature on them being enormous, let us address to the monumental works of H\"ormander \cite{hormander1,hormander2,hormander3,hormander4}, where the notion of singular support is also refined by the notion of wave front set. When arguing in the global Euclidean setting, see for example \cite{ helffer1984book,shubin2001pseudodifferential}, the inclusion \eqref{LuigiIntro3} is replaced by estimates expressing an off-diagonal decay of the kernel $k$, let us formalize the definition in an elementary functional framework. Set as standard $\la x\ra=(1+|x|^2)^{1/2}$, $x\in\rd$, and define the vector space
\begin{equation}\label{LuigiIntro4}
	V(\rd)=\{f\in\mathcal{C}(\rd):\exists M\in\bN, C>0 \ \text{with} \ |f(x)|\leq C\la x\ra^{M} \}.
\end{equation}

\begin{definition}\label{defLuigiintro1}
	We say that $k\in\mathcal{C}(\rdd)$ is a \emph{diagonal kernel} if for every integer $N>0$ there exists $C_N>0$ such that
	\begin{equation}\label{LuigiIntro5}
		|k(x,y)|\leq C_N\la x-y\ra^{-N},\qquad x,y\in\rd.
	\end{equation}
\end{definition}

The corresponding operator $T$ in \eqref{LuigiIntro1} is then a linear map on $V(\rd)$. As counterpart of the notion of singular support we may define the asymptotic singular support $A-\sing\supp(f)$.

\begin{definition}\label{defLuigiintro2}
	Let $x_0\in\rd$, $x_0\neq0$, $f\in V(\rd)$. We say that $x_0\notin A-\sing\supp(f)$ if there exists a conic neighborhood $\Gamma\subseteq\rd$ of $x_0$ such that for every $N\in\bN$
	\begin{equation}\label{LuigiIntro6}
		|f(x)|\leq C_N\la x\ra^{-N}, \qquad x\in\Gamma,
	\end{equation}
	for suitable constants $C_N>0$.
\end{definition}

For $T$ with diagonal kernel as in Definition \ref{defLuigiintro1} we then have by standard arguments
\begin{equation}\label{LuigiIntro7}
A-\sing\supp(Tf)\subseteq A-\sing\supp(f),\qquad f\in V(\rd).
\end{equation}
Basic example of the preceding framework is given by time-frequency analysis of operators, see \cite{cordero2020time} and the bibliography therein. In fact, the Gabor matrix of a pseudo-differential operator is a diagonal kernel, and the A -- singular support coincides with the Gabor wave front set, see the last section of the paper.

It is natural trying to extend Definition \ref{defLuigiintro1} to a more general setting, namely to kernels $k\in\cS'(\rdd)$ corresponding to general continuous maps $T:\cS(\rd)\to\cS'(\rd)$. For, we perform a smoothing of $k$ by convolution with the Gaussian
\begin{equation}\label{LuigiIntro8}
	\f(t)=e^{-\pi|t|^2}, \qquad t\in\rdd
\end{equation}
providing the kernel in $\mathcal{C}^\infty(\rdd)$
\begin{equation}\label{regKernel}
	\tilde k(x,y)=k\ast \f(x,y), \qquad x,y\in\rd.
\end{equation}
We call $\tilde k$ the \emph{smoothed kernel} of $T$.

\begin{definition}\label{def-intro-2}
	We say that $k\in\cS'(\rdd)$ is \emph{quasi-diagonal} if the {smoothed kernel} $\tilde k$ is a diagonal kernel.
\end{definition}

Then \eqref{LuigiIntro7} is valid for $\tilde T$ with kernel $\tilde k$. Note that operators with diagonal kernel form an algebra, whereas composition of operators with quasi-diagonal kernel is not defined in general. Diagonality in Definition \ref{defLuigiintro1} implies quasi-diagonality, and the Peetre's condition $\mbox{supp}(k)\subseteq\Delta$ gives quasi-diagonality under suitable asymptotic conditions on $k$ at the diagonal.

Basic examples come again from time-frequency analysis. Namely, the Wigner kernel of a pseudo-differential operator is quasi-diagonal, cf. \cite{cordero2024wigner,cordero2025wigner,cordero2024WGM,cordero2022wigner} and the last section of this paper.

Independently of the applications to time-frequency analysis, it is interesting to discuss the quasi-diagonal property for kernels of different classes of operators. Here, we address \emph{metaplectic operators}, representing a very active field of research at this moment, cf.\cite{de2014metaplectic,dias2013metaplectic,fuhr2024metaplectic,giacchi2024boundedness,lerner2024uncertainty,mcnulty2024metaplectic,savin2022local,zhang2021uncertainty}, we refer to Section \ref{secPrel} below, and \cite{de2011symplectic,folland1989harmonic} for their definition. In short, the metaplectic group is a double cover of the symplectic group $\Sp(d,\bR)$ on $L^2(\rd)$. For every symplectic matrix $S\in\Sp(d,\bR)$ we can then compute a metaplectic operator $\hat S$ acting on $L^2(\rd)$, defined up to a phase factor. We have $\hat S:\cS(\rd)\to\cS(\rd)$, with extension from $\cS'(\rd)$ to $\cS'(\rd)$. Since the validity of the estimates \eqref{LuigiIntro5} does not depend on the phase factor, we shall omit in the following to specify its choice. In Section \ref{section3} we shall study the quasi-diagonality of the kernels of metaplectic operators. The results are somewhat technical, and we limit in this introduction to a statement in dimension $d=1$, i.e. the case of the so-called \emph{linear canonical transforms}. Consider then 
\begin{equation}
	\label{LuigiSq1}
		S=\begin{pmatrix}
			A & B\\
			C & D
		\end{pmatrix},
\end{equation}
with $A,B,C,D\in\bR$, $AD-BC=1$, and the corresponding metaplectic operator $\hat S$. 

\begin{proposition}\label{PropLuigiSq14}
	The kernel of $\hat S$ is quasi-diagonal if and only if $D=1$ or $C\neq0$. 
\end{proposition}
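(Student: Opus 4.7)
The plan is to compute the smoothed kernel $\tilde k_S$ explicitly and reduce quasi-diagonality to positivity properties of a real quadratic form. When $B\neq 0$, the Schwartz kernel of $\hat S$ is the smooth complex Gaussian
\begin{equation*}
k_S(x,y) = (2\pi i B)^{-1/2}\exp\Bigl(\tfrac{i}{2}(x,y) M (x,y)^T\Bigr), \qquad M = \tfrac{1}{B}\begin{pmatrix} D & -1 \\ -1 & A \end{pmatrix},
\end{equation*}
with $\det M = (AD-1)/B^2 = C/B$. When $B = 0$ we have $AD = 1$ and $D\neq 0$, and $k_S$ reduces to the chirp-modulated delta $k_S(x,y) = \sqrt{|D|}\, e^{i\pi CDx^2}\delta(y - Dx)$, supported on the line $y = Dx$.

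In both cases $\tilde k_S = k_S\ast\varphi$ is a Gaussian integral with complex quadratic exponent, which I would evaluate by completing the square. For $B\neq 0$ this gives
\begin{equation*}
\tilde k_S(x,y) = c_S \exp\Bigl(\tfrac{i}{2}(x,y) M (x,y)^T - \tfrac{1}{4}(x,y) M^2 P^{-1} (x,y)^T\Bigr), \qquad P = \pi I - \tfrac{i}{2}M,
\end{equation*}
so that $|\tilde k_S(x,y)|^2 = C_S \exp(-Q_S(x,y))$ with $Q_S(x,y) = \tfrac{1}{2}(x,y)\,\Re(M^2 P^{-1})(x,y)^T$. Since $M$ is real symmetric and commutes with $P$, diagonalising $M$ shows that the eigenvalues of $\Re(M^2 P^{-1})$ are $\pi\lambda_j^2/(\pi^2 + \lambda_j^2/4)$ in terms of the eigenvalues $\lambda_1,\lambda_2$ of $M$: non-negative, and strictly positive exactly when $\lambda_j\neq 0$. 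For $B = 0$ the factor $\delta(y - Dx)$ collapses the double integral to a one-dimensional Gaussian integral, and an analogous completion of the square yields $|\tilde k_S|^2 = C\exp(-Q_S)$ for another explicit real quadratic form.

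The estimate $|\tilde k_S(x,y)|\leq C_N \langle x-y\rangle^{-N}$ for every $N$ is equivalent to $Q_S(x,y)\to\infty$ as $|x-y|\to\infty$, uniformly in $x+y$. For a positive semidefinite quadratic form on $\bR^2$, this holds iff $Q_S$ is positive definite, or $Q_S$ has rank one with null direction along the diagonal $\{x=y\} = \bR(1,1)^T$. Collecting the two cases: for $B\neq 0$, $Q_S$ is positive definite iff $\det M = C/B \neq 0$, i.e.\ $C\neq 0$; otherwise $C = 0$ and the null direction of $Q_S$ equals $\ker M = \bR(1,D)^T$, which coincides with the diagonal iff $D = 1$. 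For $B = 0$ the chirp $e^{i\pi CDx^2}$ makes $Q_S$ positive definite iff $CD\neq 0$, equivalently $C\neq 0$; if $C = 0$ then $\hat S$ is a pure dilation and the null direction of $Q_S$ is again $\bR(1,D)^T$. Combining yields the claimed equivalence.

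The main obstacle I anticipate is the bookkeeping in the complex Gaussian integrals and in extracting the clean form of $Q_S$. The key algebraic fact that makes the eigenvalue analysis transparent is that $M$ and $P = \pi I - \tfrac{i}{2}M$ commute, so $\Re(M^2 P^{-1})$ is diagonalised in the same orthogonal basis as $M$; the $B = 0$ case requires a separate but similar direct one-dimensional computation.
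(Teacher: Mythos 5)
Your argument is correct and takes essentially the same route as the paper: in both cases ($B\neq 0$ and $B=0$) the smoothed kernel is computed explicitly as a Gaussian by completing the square (Theorem \ref{convGauss}), giving $|\tilde k|=C e^{-Q_S}$ with $Q_S$ positive semidefinite, and quasi-diagonality is read off from whether the null space of $Q_S$ — trivial when $C\neq0$, and $\ker M=\bR(1,D)^T$ when $C=0$ — lies on the diagonal, which is precisely the paper's identification $\ker(\tilde S)=\Gamma_S=\{(x,D^Tx):x\in R(C)^\perp\}$ specialized to $d=1$. The only (immaterial) deviation is your non-$2\pi$-normalized chirp convention, which rescales $M$ by a constant and does not affect the null-space analysis.
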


So, the only case of non-quasi-diagonality is when $C=0$ and $D\neq1$. To clarify the main issues in Section \ref{section3}, we test here Proposition \ref{PropLuigiSq14} on the following three basic examples:
\begin{equation}\label{LuigiSq2}
	S_1=\begin{pmatrix}
			D^{-1}& 0\\
			0 & D
		\end{pmatrix}, \quad 
		S_2=\begin{pmatrix}
			1 & B\\
			0 & 1
		\end{pmatrix}, \quad
		S_3=\begin{pmatrix}
			0 & 1\\
			-1 & 0
		\end{pmatrix},
\end{equation}
with $D\neq0$, $B\neq0$, corresponding to the metaplectic operators
\begin{equation}\label{LuigiSq3}
	\hat S_1f(x)=|D|^{1/2}f(Dx),\quad \hat S_2f(x)=|B|^{-1/2}e^{i\pi B^{-1}(\cdot)^2}\ast f(x), \quad \hat S_3f(x)=\cF f(x),
\end{equation}
where
\begin{equation}
	\cF f(x)=\int_{-\infty}^\infty f(y)e^{-2\pi i xy}dy, \qquad x\in\bR,
\end{equation}
is the Fourier transform of $f\in\cS(\bR)$. The kernel of the dilation map $\hat S_1$ is given in terms of the Dirac distribution by
\begin{equation}\label{LuigiSq4}
	k_1(x,y)=|D|^{1/2}\delta_{y-Dx}.
\end{equation}
By convolution with the Gaussian, we obtain
\begin{equation}\label{argumentGG}
	\tilde k_1(x,y)=\lambda_1 e^{-\lambda_2(y-Dx)^2},
\end{equation}
where $\lambda_1,\lambda_2$ are positive constants depending on $D$. It is then obvious that we have quasi-diagonality if and only if $D=1$. The analysis of the kernel of the convolution operator $\hat S_2$ is more interesting:
\begin{equation}\label{LuigiSq6}
	k_2(x,y)=|B|^{-1/2}e^{i\pi B^{-1}(y-x)^2},
\end{equation}
for which quasi-diagonality is not so evident. An easy computation gives
\begin{equation}\label{LuigiSq7}
	\tilde k_2(x,y)=\sigma_1e^{-\sigma_2(y-x)^2},
\end{equation}
where $\sigma_1,\sigma_2\in\bC$ depend on $B$, with $\Re(\sigma_2)>0$, so that we may again deduce diagonal estimates, according to Proposition \ref{PropLuigiSq14}. Finally, concerning the Fourier transform $\hat S_3$,
\begin{equation}\label{LuigiSq8}
	k_3(x,y)=e^{-2\pi ixy}.
\end{equation}
A direct computation, with the help of Lemma \ref{convGauss} below, gives
\begin{equation}
	\tilde k_3(x,y)=\frac{1}{\sqrt{2}}e^{-\pi(x^2+y^2)/2-i\pi xy}.
\end{equation}
So, $\tilde k_3\in\cS(\bR^2)$, and quasi-diagonality is obvious. 

When passing to the multi-dimensional case, $S$ in \eqref{LuigiSq1} is a $2d\times2d$ symplectic matrix. The conditions in Proposition \ref{PropLuigiSq14} generalize to $D=I$, $C\in\GL(d,\bR)$, and both are still sufficient for quasi-diagonality, see Propositions \ref{Cinv} and Corollary \ref{DI} in Section \ref{section3}. However, they are not any more necessary in general. Let us mention that kernels of operators are strictly and simply related to their Weyl symbols, studied extensively for metaplectic operators by M. de Gosson in \cite{de2005weyl}.

In Section \ref{section4} we propose some examples clarifying the connection with problems from time-frequency analysis.

\section{Preliminaries and notation}\label{secPrel}
We denote by $xy=x\cdot y$ the standard inner product of $\rd$, with $\cS(\rd)$ the Schwartz space of rapidly decreasing functions, and $\cS'(\rd)$ is the space of tempered distributions. We denote by $0$ the null matrix and by $I$ the identity matrix, regardless of their dimensions. If $M$ is a matrix, $\ker(M)$ and $R(M)$ denote its kernel and its range, respectively. If $M\in\bR^{d\times d}$ is a positive-semidefinite matrix, we denote by $\f_M(t)=e^{-\pi Mt\cdot t}$. If $M=I$, $\f:=\f_M$. We denote by $\GL(d,\bR)$ the group of $d\times d$ invertible matrices.

We denote the real part of $M\in\bC^{m\times n}$ by $\Re(M)$. Recall that if $M=M_1+iM_2$, with $M_1,M_2\in\bR^{d\times d}$, and $M_1$ is invertible, then 
\begin{equation}\label{invCmat}
	(M_1+iM_2)^{-1}=(M_1+M_2M_1^{-1}M_2)^{-1}-iM_1^{-1}M_2(M_1+M_2M_1^{-1}M_2)^{-1}.
\end{equation}
If $M\in\bR^{m\times n}$, $M^+$ denotes its Moore-Penrose inverse (pseudo-inverse). Recall that $M^+M=\id_{\ker(M)^\perp}$ and $MM^+=\id_{R(M)}$, and they are symmetric. Also, $(M^T)^+=(M^+)^T$. If $M\in\bR^{m\times n}$ has full-column rank, then $M^+=(M^TM)^{-1}M^T$. If $M:\bR^r\to\bR^d$ ($r\leq d$) maps an orthonormal basis of $\bR^r$ into an orthonormal basis of $R(M)$, then $M^+=M^T$.

\subsection{Gaussian integrals}

We make use of the following result \cite[Theorem A1]{folland1989harmonic}. 

\begin{theorem}\label{convGauss}
	Let $Q\in\bC^{d\times d}$ such that $Q^T=Q$ and $\Re(Q)$ is positive-definite. Then, for every $\xi\in\bC^d$,
	\begin{equation}\label{convGauss-formula}
		\int_{\rd}e^{-\pi Qt\cdot t}e^{-2\pi i\xi\cdot t}dt=\det(Q)^{-1/2}e^{-\pi Q^{-1}\xi\cdot\xi}.
	\end{equation}
	The branch of the square root is chosen so that $\det(Q)^{-1/2}>0$ when $Q$ is real and positive-definite.
\end{theorem}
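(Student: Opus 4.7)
The plan is to proceed by direct computation, separating the two regimes $B \neq 0$ and $B = 0$ because the kernel $k$ of $\hat S$ takes qualitatively different forms in the two cases. In both regimes, however, $k$ is essentially a Gaussian with imaginary quadratic phase (possibly concentrated on a line), so the smoothing $\tilde k = k \ast \f$ reduces to an explicit Gaussian integral handled by Theorem~\ref{convGauss}. The outcome in both cases is $|\tilde k(x,y)| = c \exp\bigl(-\pi \langle P v, v\rangle\bigr)$ with $v = (x,y)$ and $P \in \bR^{2\times 2}$ a real symmetric positive semidefinite matrix, so the quasi-diagonality analysis reduces to understanding the structure of $\ker P$.

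If $B \neq 0$, then $\hat S$ is the linear canonical transform with kernel $k(x,y) = (iB)^{-1/2} e^{i\pi(Dx^2 - 2xy + Ay^2)/B}$, whose quadratic form in $(x,y)$ is purely imaginary. The convolution with $\f$ is a 2D Gaussian integral with quadratic matrix $R = I + \tilde Q$ satisfying $\mathrm{Re}(R) = I$, so Theorem~\ref{convGauss} applies. After rationalizing $R^{-1}$ and using $AD - BC = 1$ repeatedly, I would arrive at
\begin{equation*}
	P = \frac{1}{N}\begin{pmatrix} 1 + C^2 + D^2 & -(A+D) \\ -(A+D) & 1 + A^2 + C^2\end{pmatrix}, \quad N := (B-C)^2 + (A+D)^2 > 0,
\end{equation*}
with $\det P = C^2(2 + A^2 + B^2 + C^2 + D^2)/N^2$. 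If instead $B = 0$, then $AD = 1$ and $\hat S f(x) = |D|^{1/2} e^{i\pi CDx^2} f(Dx)$, so $k(x,y) = |D|^{1/2} e^{i\pi CD x^2} \delta(y - Dx)$; the smoothing collapses to a 1D Gaussian integral in $x'$ with quadratic coefficient $(1+D^2) - iCD$, and Theorem~\ref{convGauss} again yields an analogous formula with $\det P = C^2 D^2/\Delta$, $\Delta := (1+D^2)^2 + C^2 D^2 > 0$.

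The concluding step is a general principle: for a real symmetric positive semidefinite matrix $P \in \bR^{2\times 2}$, the Gaussian $c\exp(-\pi\langle Pv, v\rangle)$ is quasi-diagonal if and only if $\ker P \subseteq \mathrm{span}\{(1,1)\}$. In that case $\langle Pv, v\rangle$ controls $(x-y)^2$ from below, yielding Gaussian decay in $|x-y|$; conversely, any kernel direction $(1, D_0)$ with $D_0 \neq 1$ gives a sequence $(t, D_0 t)$ along which $|\tilde k|$ is bounded from below while $|x-y| \to \infty$. Since $\det P \propto C^2$ in both regimes, $C \neq 0$ forces $P$ positive definite. When $C = 0$, a direct computation in both regimes gives $\ker P = \mathrm{span}\{(1, D)\}$, which lies in $\mathrm{span}\{(1,1)\}$ exactly when $D = 1$. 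Hence $\tilde k$ is a diagonal kernel if and only if $C \neq 0$ or $D = 1$.

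The main technical obstacle I anticipate is the algebraic bookkeeping needed to produce the clean factorization $\det P \propto C^2$ and to identify $\ker P = \mathrm{span}\{(1, D)\}$ at $C = 0$: these simplifications rely essentially on $AD - BC = 1$ and on careful extraction of the real part of $R^{-1}$ after inverting a complex $2 \times 2$ matrix. Once these identities are in place, the analytic content is entirely absorbed into two direct applications of Theorem~\ref{convGauss}.
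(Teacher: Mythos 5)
There is a mismatch of target here: the statement you were asked to prove is Theorem \ref{convGauss}, i.e.\ the formula \eqref{convGauss-formula} for the Fourier transform of a complex Gaussian $e^{-\pi Qt\cdot t}$ with $Q$ symmetric and $\Re(Q)$ positive-definite. Your proposal does not prove this at all: it proves (a sketch of) Proposition \ref{PropLuigiSq14}, the characterization of quasi-diagonality of linear canonical transform kernels in dimension $d=1$, and it explicitly \emph{invokes} Theorem \ref{convGauss} twice as a black box to evaluate the smoothed kernels. As an argument for the statement in question it is therefore circular, or rather empty — the analytic content of \eqref{convGauss-formula} is nowhere addressed. (For comparison, the paper does not prove Theorem \ref{convGauss} either; it imports it verbatim from Folland, \cite[Theorem A1]{folland1989harmonic}, and your Proposition-\ref{PropLuigiSq14}-style computation is essentially what the paper carries out later, in Section \ref{section33}, for the cases $B\in\GL(d,\bR)$ and $B=0$.)

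If you want to actually prove Theorem \ref{convGauss}, the standard route is: (1) for $Q$ real positive-definite and $\xi\in\rd$, diagonalize $Q=O^T\Lambda O$ and change variables to reduce to the one-dimensional identity $\int_{\bR}e^{-\pi\lambda t^2}e^{-2\pi i\xi t}dt=\lambda^{-1/2}e^{-\pi\xi^2/\lambda}$, which itself follows from $\int_\bR e^{-\pi t^2}dt=1$ by completing the square and a contour shift; (2) extend to $\xi\in\bC^d$ by noting that both sides of \eqref{convGauss-formula} are entire in $\xi$ and agree on $\rd$; (3) extend to complex symmetric $Q$ with $\Re(Q)$ positive-definite by holomorphy in $Q$: the set of such $Q$ is open and convex (hence simply connected), the left-hand side is holomorphic there by dominated convergence and Morera, $\det(Q)\neq0$ on this set, so a holomorphic branch of $\det(Q)^{-1/2}$ exists and is pinned down by the normalization $\det(Q)^{-1/2}>0$ on real positive-definite $Q$; the two sides then coincide by analytic continuation from the real case. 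The branch issue in step (3) is the only delicate point and is exactly what the last sentence of the theorem is recording.
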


\subsection{Symplectic matrices and metaplectic operators}
A matrix
\begin{equation}\label{blockS}
	S=\begin{pmatrix}
		A & B\\
		C & D
	\end{pmatrix}\in \bR^{2d\times 2d}
\end{equation}
is symplectic if the blocks $A,B,C,D\in\bR^{d\times d}$ satisfy
\begin{align}
	\label{GG1}
	&A^TC=C^TA,\\
	\label{GG2}
	&B^TD=D^TB,\\
	\label{GG3}
	&A^TD-C^TB=I,
\end{align}
or equivalently,
\begin{align}
	\label{GG4}
	&DC^T=CD^T,\\
	\label{GG5}
	&BA^T=AB^T,\\
	\label{GG6}
	&DA^T-CB^T=I.
\end{align}
The group of $2d\times2d$ symplectic matrices is denoted by $\Sp(d,\bR)$, and it is generated by the matrix of the standard symplectic form of $\bC^d$,
\begin{equation}\label{defJ}
	J=\begin{pmatrix}
		0 & I\\
		-I & 0
	\end{pmatrix}
\end{equation}
along with matrices in the form
\begin{equation}\label{defVPDE}
	V_P=\begin{pmatrix}
		I & 0\\
		P & I
	\end{pmatrix}, \quad \text{and} \quad 
	\cD_E=\begin{pmatrix}
		E^{-1} & 0\\
		0 & E^T
	\end{pmatrix},
\end{equation}
$P\in\bR^{d\times d}$, $P=P^T$ and $E\in\GL(d,\bR)$. In view of \eqref{GG1}, \eqref{GG2} and \eqref{GG3}, kernels and ranges of the blocks $A$, $B$, $C$ and $D$ in \eqref{blockS} are interrelated, cf. \cite[Corollary B.2, Table B1]{cordero2024hardy}. We also recall that $\Sp(1,\bR)=SO(2)$.

If $S\in\Sp(d,\bR)$ is \emph{free}, i.e.,  $B\in\GL(d,\bR)$ in \eqref{blockS}, then $B^{-1}A$ is symmetric by \eqref{GG5}. However, this is not the case if $B$ is singular, that is $B^+A$ is not symmetric in general, but it acts symmetrically on $\ker(B)^\perp$. 
\begin{lemma}\label{lemmaSymmetry}
	Let $S\in\Sp(d,\bR)$ have block decomposition \eqref{blockS}. Then,
	\[
		B^+Ax\cdot y=x\cdot B^+Ay, \qquad x,y\in\ker(B)^\perp.
	\]
	Consequently, if $W:\bR^r\to\ker(B)^\perp$ is a parametrization of $\ker(B)^\perp$, then $W^TB^+AW\in\bR^{r\times r}$ is symmetric.
\end{lemma}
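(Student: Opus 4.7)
The plan is to deduce the symmetry directly from the symplectic identity $AB^T = BA^T$ (equation \eqref{GG5}) together with standard properties of the Moore--Penrose pseudoinverse. The two key facts I would use are that both $B^+B$ and $B^T(B^T)^+$ equal the orthogonal projection $P_{\ker(B)^\perp}$ onto $\ker(B)^\perp$ (using that $R(B^T) = \ker(B)^\perp$), and that $(B^T)^+ = (B^+)^T$.

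The core computation is to left-multiply \eqref{GG5} by $B^+$ and right-multiply by $(B^T)^+$, obtaining
\[
	B^+ A B^T (B^T)^+ = B^+ B A^T (B^T)^+,
\]
which, by the pseudoinverse identities above, reduces to
\[
	B^+ A \, P_{\ker(B)^\perp} = P_{\ker(B)^\perp} A^T (B^T)^+.
\]
For $y \in \ker(B)^\perp$ the projection on the left acts as the identity, so $B^+ A y = P_{\ker(B)^\perp} A^T (B^T)^+ y$. Pairing with $x \in \ker(B)^\perp$ and using that $P_{\ker(B)^\perp}$ is self-adjoint and fixes $x$, together with $(B^T)^+ = (B^+)^T$, gives
\[
	\langle x, B^+ A y\rangle = \langle x, A^T (B^T)^+ y\rangle = \langle A x, (B^+)^T y\rangle = \langle B^+ A x, y\rangle,
\]
which is precisely the claimed symmetry.

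For the consequence about $W^T B^+ A W$, the assumption on $W$ guarantees that $Wu, Wv \in \ker(B)^\perp$ for every $u, v \in \bR^r$, and symmetry is then an immediate rephrasing of the first part:
\[
	\langle u, W^T B^+ A W v\rangle = \langle Wu, B^+ A W v\rangle = \langle B^+ A Wu, Wv\rangle = \langle W^T B^+ A W u, v\rangle.
\]
I do not expect any serious obstacle in the argument; the only point requiring care is that $B^+ A$ is generally \emph{not} a symmetric endomorphism of $\rd$, so one must check that the test vectors always lie in $\ker(B)^\perp$ and insert the projection $P_{\ker(B)^\perp}$ at the appropriate places when restricting the pseudoinverse identity. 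This is exactly what is achieved by conjugating with an orthonormal parametrization $W$ of $\ker(B)^\perp$.
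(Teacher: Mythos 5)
Your proof is correct and follows essentially the same route as the paper: start from the symplectic identity $AB^T=BA^T$, multiply by $B^+$ on the left and $(B^T)^+$ on the right, use that $B^+B$ and $B^T(B^T)^+$ are the orthogonal projection onto $\ker(B)^\perp=R(B^T)$ together with $(B^T)^+=(B^+)^T$, and then restrict to vectors in $\ker(B)^\perp$. The deduction of the symmetry of $W^TB^+AW$ is likewise the same immediate rephrasing used in the paper.
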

\begin{proof}
	We prove that $B^+Ax\cdot y=x\cdot B^+Ay$ for every $x,y\in\ker(B)^\perp$. This implies that
	\[
		W^TB^+AWt\cdot s=t\cdot W^TB^+AWs, \qquad s,t\in\bR^r
	\]
	and, consequently, the assertion. Let $x,y\in\ker(B)^\perp$. Since $S$ is symplectic,
	\[
		AB^T=BA^T.
	\]
	Applying $B^+$ and $(B^T)^+$ to both sides, we get
	\[
		B^+AB^T(B^T)^+=B^+BA^T(B^T)^+.
	\]
	Since $B^T(B^T)^+$ is the identity on $R(B^T)=\ker(B)^\perp$, we have
	\begin{align*}
		B^+Ax\cdot y&=B^+AB^T(B^T)^+x\cdot y=B^+BA^T(B^T)^+x\cdot y\\
		&=A^T(B^T)^+x\cdot B^+By=A^T(B^T)^+x\cdot y,
	\end{align*}
	where we also used that that $BA^T$ and $B^+B$ are symmetric, and the latter is the identity on $\ker(B)^\perp$. Since $(B^T)^+=(B^+)^T$, the assertion follows.
	
\end{proof}

\begin{corollary}\label{corSymmetry}
	If $V_1:\bR^r\to R(B)$ is a parametrization of $R(B)$, then $V_1^TDB^+V_1$ is symmetric.
\end{corollary}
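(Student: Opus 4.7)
The plan is to deduce the corollary from Lemma \ref{lemmaSymmetry} by applying it to the inverse symplectic matrix $S^{-1}$ rather than to $S$ itself. Since $\Sp(d,\bR)$ is a group, $S^{-1}$ is symplectic, and a direct computation (using the relations \eqref{GG1}--\eqref{GG6}) gives the well-known block form
\[
S^{-1}=\begin{pmatrix} D^T & -B^T \\ -C^T & A^T \end{pmatrix}.
\]
Thus the ``$B$-block'' of $S^{-1}$ is $-B^T$, whose orthogonal complement of kernel is $\ker(-B^T)^\perp=\ker(B^T)^\perp=R(B)$. Consequently, the given $V_1\colon\bR^r\to R(B)$ is precisely a parametrization of the orthogonal complement of the kernel of the $B$-block of $S^{-1}$, which is the correct input for Lemma \ref{lemmaSymmetry} applied to $S^{-1}$.

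Applying that lemma to $S^{-1}$ with the parametrization $V_1$ produces the symmetric matrix $V_1^T(-B^T)^+D^TV_1$. Using the standard Moore--Penrose identities $(-B^T)^+=-(B^T)^+$ and $(B^T)^+=(B^+)^T$, one rewrites this as $-V_1^T(B^+)^TD^TV_1$, whence $V_1^T(B^+)^TD^TV_1$ is symmetric as well. Finally, transposition preserves symmetry, and
\[
\bigl[V_1^T(B^+)^TD^TV_1\bigr]^T=V_1^T D B^+ V_1,
\]
which is the claimed symmetry.

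The only point that requires real care is the transfer of Lemma \ref{lemmaSymmetry} from $S$ to $S^{-1}$: one must correctly identify the $B$-block of $S^{-1}$ as $-B^T$ and handle the two pseudoinverse identities above. Beyond this bookkeeping, the argument is essentially one line. A more direct route, working only with the symplectic identities $DC^T=CD^T$, $BA^T=AB^T$ and $DA^T-CB^T=I$, is in principle possible, but would amount to repeating, for $R(B)$ in place of $\ker(B)^\perp$, the same manipulations already carried out in the proof of Lemma \ref{lemmaSymmetry}; the reduction via $S^{-1}$ avoids this duplication.
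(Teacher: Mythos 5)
Your proposal is correct and follows exactly the paper's own route: the paper proves the corollary by applying Lemma \ref{lemmaSymmetry} to $S^{-1}=\begin{pmatrix} D^T & -B^T\\ -C^T & A^T\end{pmatrix}$, which is precisely your argument, with the pseudoinverse and transposition bookkeeping spelled out.
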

\begin{proof}
	It follows by applying Lemma \ref{lemmaSymmetry} to
	\begin{equation}
		S^{-1}=\begin{pmatrix}
		D^T & -B^T\\
		-C^T & A^T
		\end{pmatrix}.
	\end{equation}
	
\end{proof}

\subsection{Metaplectic operators} Consider the Schr\"odinger representation of the Heisenberg group: for $(x,\xi;\tau)\in\rd\times\rd\times\bR$ and $f\in L^2(\rd)$,
\begin{equation}
	\rho(x,\xi;\tau)f(t)=e^{2\pi i\tau}e^{-i\pi x\xi}e^{2\pi i\xi t}f(t-x).
\end{equation}
For every $S\in\Sp(d,\bR)$ 
there exists an operator $\hat S$ unitary on $L^2(\rd)$ such that
\begin{equation}\label{intertS}
	\hat S\rho(x,\xi;\tau)\hat S^{-1}=\rho(S(x,\xi);\tau).
\end{equation}
Any such operator is called \emph{metaplectic operator}. If $\hat S'$ is another unitary operator satisfying \eqref{intertS}, there exists a phase factor, i.e., $c\in\bC$ with $|c|=1$, so that $\hat S'=c\hat S$. Vice versa, any metaplectic operator $\hat S$ identifies uniquely the corresponding \emph{projection} $S$. The mapping $\hat S\in\{\hat S:S\in\Sp(d\bR)\}\mapsto S\in\Sp(d,\bR)$ is a homomorphism. 

\begin{example}\label{exampleMetap}
	\begin{enumerate}[(i)]
		\item The Fourier transform
		\begin{equation}
			\cF f(\xi)=\hat f(\xi)=\int_{\rd}f(x)e^{-2\pi i\xi x}dx, \qquad f\in\cS(\rd)
		\end{equation}
		is a metaplectic operator, and its projection is the matrix $J$ in \eqref{defJ}.
		\item For $E\in\GL(d,\bR)$, the operator
		\begin{equation}
			\mathfrak{T}_Ef(t)=|\det(E)|^{1/2}f(Et), \qquad f\in L^2(\rd)
		\end{equation}
		is metaplectic with projection $\cD_E$ defined as in \eqref{defVPDE}.
		\item For $P\in\bR^{d\times d}$, $P=P^T$, the chirp-product
		\begin{equation}
			\mathfrak{p}_Pf(t)=e^{i\pi Pt\cdot t}f(t), \qquad f\in L^2(\rd)
		\end{equation}
		is a metaplectic operator with projection $V_P$ as in \eqref{defVPDE}.
	\end{enumerate}
\end{example}

The operators in \eqref{exampleMetap} generate the group of metaplectic operators. We recall the following basic continuity property of metaplectic operators.

\begin{proposition}
	Metaplectic operators restrict to surjective isomorphisms of $\cS(\rd)$, and extend to surjective isomorphisms of $\cS'(\rd)$.
\end{proposition}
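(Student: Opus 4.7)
The plan is to reduce the statement to the three families of generators of the metaplectic group exhibited in Example \ref{exampleMetap}: the Fourier transform $\cF$, the dilations $\mathfrak{T}_E$ with $E\in\GL(d,\bR)$, and the chirp products $\mathfrak{p}_P$ with $P=P^T$. Since any metaplectic operator $\hat S$ is a finite composition of operators of these three types, and since the homeomorphisms of $\cS(\rd)$ form a group under composition, it suffices to check the claim for each generator separately.

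For $\cF$ the fact that it is a topological automorphism of $\cS(\rd)$ is the classical Schwartz theorem, with inverse given by the inverse Fourier transform. For $\mathfrak{T}_E f(t)=|\det E|^{1/2} f(Et)$, a direct chain-rule estimate of the seminorms $\sup_t|t^\alpha\partial^\beta(\mathfrak{T}_Ef)(t)|$ in terms of those of $f$ yields continuity on $\cS(\rd)$, and the inverse is $\mathfrak{T}_{E^{-1}}$. For $\mathfrak{p}_P f(t)=e^{i\pi Pt\cdot t}f(t)$, I would observe that $t\mapsto e^{i\pi Pt\cdot t}$ is smooth with all derivatives of at most polynomial growth, so it belongs to the space $\cO_M(\rd)$ of slowly increasing smooth functions; multiplication by an $\cO_M$-function is a standard continuous operation on $\cS(\rd)$, and the inverse is $\mathfrak{p}_{-P}$, which is of the same type. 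Composing these three cases yields that every $\hat S$ restricts to a surjective homeomorphism of $\cS(\rd)$.

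To pass to $\cS'(\rd)$ I would invoke duality. Since $\hat S$ and $\hat S^{-1}$ are both metaplectic and hence, by what was just proved, continuous from $\cS(\rd)$ to $\cS(\rd)$, their transposes give continuous linear maps on $\cS'(\rd)$ that are inverse to one another. On the dense subspace $\cS(\rd)\subset\cS'(\rd)$ these transposes agree with $\hat S$ and $\hat S^{-1}$ seen as operators on $L^2(\rd)$, so they provide the required surjective isomorphism of $\cS'(\rd)$ extending the $L^2$ action. The main point of care is the chirp case, where one must verify that the polynomial growth of the derivatives of $e^{i\pi Pt\cdot t}$ is properly absorbed by the rapid decay of Schwartz functions; the remaining arguments amount to routine bookkeeping of seminorms.
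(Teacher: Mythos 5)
Your argument is correct, and it is the standard one: the paper does not prove this proposition at all, but simply recalls it as a known continuity property, so there is no competing proof to compare against. Reducing to the generators $\cF$, $\mathfrak{T}_E$, $\mathfrak{p}_P$ (which the paper itself notes generate the metaplectic group), checking each is a topological automorphism of $\cS(\rd)$, and then extending by duality is exactly how this fact is established in the references the paper cites. One small point of precision in the duality step: with respect to the bilinear pairing $\langle u,g\rangle=\int u\,g$, the transpose of $\hat S$ is not $\hat S^{-1}$ itself but $\overline{\hat S^{-1}\overline{(\,\cdot\,)}}$ (for instance $\cF^t=\cF$, not $\cF^{-1}$); this operator is still a continuous automorphism of $\cS(\rd)$ of the same metaplectic type, so defining the extension by $\langle \hat S u,g\rangle:=\langle u,\hat S^t g\rangle$ does give a weak-$*$ continuous bijection of $\cS'(\rd)$ that restricts to $\hat S$ on $\cS(\rd)$, but the inverse pair you should transpose is $(\hat S^t,(\hat S^{-1})^t)$ rather than literally $(\hat S,\hat S^{-1})$. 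With that wording fixed, the proof is complete.
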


We will use the following integral representation of metaplectic operators, cf. \cite{cordero2024hardy,ter1999integral}.

\begin{lemma}
	Let $\hat S$ be a metaplectic operator with projection $S$ having blocks \eqref{blockS}, with $B\neq0$. For $x\in\rd$, write
	\begin{equation}
		x=x_1+x_2\in R(B)\oplus R(B)^\perp.
	\end{equation}
	Then, for every $f\in\cS(\rd)$,
\begin{equation}\label{TMO}
	{\hat Sf(x)=\mu_Se^{i\pi(DB^+x_1\cdot x_1+DC^Tx_2\cdot x_2)}\int_{\ker(B)^\perp}f(t+D^Tx_2)e^{i\pi B^+At\cdot t}e^{-2\pi i(B^+x_1-C^Tx_2)\cdot t}dt,}
\end{equation}
for a suitable constant $\mu_S$.
\end{lemma}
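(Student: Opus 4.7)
The plan is to reduce \eqref{TMO} to the known integral representation of free metaplectic operators (the case $B\in\GL(d,\bR)$) by decomposing $S$ into a factor acting on the rank-$r$ non-degenerate directions and a factor acting on the complementary directions, where $r=\rank(B)\in\{1,\dots,d\}$ (since $B\neq 0$).

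First I would fix orthonormal parametrizations $W:\bR^r\to\ker(B)^\perp$ and $V_1:\bR^r\to R(B)$, so that $\tilde B:=V_1^TBW\in\GL(r,\bR)$ and the Moore--Penrose pseudoinverse reads $B^+=W\tilde B^{-1}V_1^T$. By Lemma \ref{lemmaSymmetry} and Corollary \ref{corSymmetry}, the matrices $W^TB^+AW$ and $V_1^TDB^+V_1$ are symmetric, so the quadratic forms $B^+At\cdot t$ for $t\in\ker(B)^\perp$ and $DB^+x_1\cdot x_1$ for $x_1\in R(B)$ are meaningful, and the chirp factors in \eqref{TMO} are well defined.

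Using the generators $J$, $V_P$, $\cD_E$ of $\Spnr$ recalled in Section \ref{secPrel} together with orthogonal symplectic rotations adapted to the splittings $\rd=R(B)\oplus R(B)^\perp$ and $\rd=\ker(B)^\perp\oplus\ker(B)$, one can factor $S=S_1\cdot S_0\cdot S_2$, where $S_0$ splits as the direct sum of a free symplectic matrix of dimension $r$ (with invertible $B$-block $\tilde B$) and a non-free symplectic matrix of dimension $d-r$ with zero $B$-block. By the homomorphism property $\hat S\mapsto S$, the metaplectic lift factorizes accordingly up to a global phase that is absorbed into $\mu_S$. The free $r$-factor, via the classical formula for free metaplectic operators (see, e.g., \cite{folland1989harmonic}), produces the integral over $\ker(B)^\perp\simeq\bR^r$ with quadratic phase $e^{i\pi B^+At\cdot t}$, Fourier phase $e^{-2\pi iB^+x_1\cdot t}$, and outer chirp $e^{i\pi DB^+x_1\cdot x_1}$. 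The non-free $(d-r)$-factor, having zero $B$-block, reduces via the symplectic identity \eqref{GG6} restricted to $\ker(B)$ to a chirp $e^{i\pi DC^Tx_2\cdot x_2}$, a translation of the argument of $f$ by $D^Tx_2$, and a modulation $e^{2\pi iC^Tx_2\cdot t}$ under the integral. Assembling the two contributions reproduces \eqref{TMO}.

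The main obstacle is the factorization step itself: one must check that $S_1\cdot S_0\cdot S_2$ actually equals $S$ with the stated block structure, which requires systematic use of the symplectic identities \eqref{GG1}--\eqref{GG6} and the interrelations between $\ker(B)$, $\ker(B^T)$, $R(A)$, $R(D)$ recalled in \cite[Corollary B.2]{cordero2024hardy}. A cleaner but more computationally demanding alternative is to define $T$ by the right-hand side of \eqref{TMO} and verify the intertwining identity \eqref{intertS} for $T$ directly; the Stone--von Neumann theorem would then identify $T=\hat S$ up to the scalar $\mu_S$, at the cost of a lengthy but mechanical manipulation of the six symplectic relations together with the identities $BB^+=\id_{R(B)}$ and $B^+B=\id_{\ker(B)^\perp}$.
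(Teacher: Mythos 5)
The paper does not prove this lemma from scratch: its proof is a one-line citation of the integral representation of ter Morsche--Oonincx \cite{ter1999integral}, in the form worked out in \cite[Corollary 3.2]{cordero2024hardy}, together with the observation that the complement $D^T(R(B)^\perp)$ may replace $A(\ker(B))$ there. Your first route is essentially the strategy underlying those references, but as written it has a genuine gap: the assertion that $S$ factors as $S_1S_0S_2$ with $S_0$ a direct sum of a free symplectic matrix in $r$ variables and a $(d-r)$-dimensional symplectic matrix with zero $B$-block, and with $S_1,S_2$ explicit enough to track, is precisely the nontrivial content of the lemma, and you flag it yourself as ``the main obstacle'' without establishing it. Conjugating by block-diagonal orthogonal symplectic matrices adapted to $R(B)\oplus R(B)^\perp$ and $\ker(B)^\perp\oplus\ker(B)$ only normalizes the $B$-block; the blocks $A$, $C$, $D$ in \eqref{blockS} in general still couple the two subspaces, so the reduction to a direct sum requires further chirp/dilation factors whose phases must be computed explicitly to recover $DB^+x_1\cdot x_1$, $DC^Tx_2\cdot x_2$, $B^+At\cdot t$ and the argument shift $D^Tx_2$. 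A concrete symptom: a genuine direct sum $S_0$ yields $\hat S_0$ as a tensor product of an operator in the $r$ variables and one in the $d-r$ variables, whose kernel has no interaction between the two groups of variables, whereas \eqref{TMO} contains the coupling factor $e^{2\pi i C^Tx_2\cdot t}$ between $x_2\in R(B)^\perp$ and the integration variable $t\in\ker(B)^\perp$; this term must therefore be produced by the outer factors, so $S_1,S_2$ cannot be only rotations and dilations, and ``assembling the two contributions'' is exactly the computation that is missing.

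Your second route --- define $T$ by the right-hand side of \eqref{TMO}, verify the intertwining relation \eqref{intertS}, and conclude $T=\mu_S\hat S$ from the irreducibility of the Schr\"odinger representation --- is sound in principle and would give a self-contained argument genuinely different from the paper's citation; but it too is only announced. To count as a proof one must actually carry out the verification that conjugation of $T$ with $\rho(x,\xi;\tau)$ produces $\rho(S(x,\xi);\tau)$, using \eqref{GG1}--\eqref{GG6} together with $BB^+=\id_{R(B)}$ and $B^+B=\id_{\ker(B)^\perp}$, and also check that $T\neq0$. As it stands, both branches of your proposal stop short of the decisive computation, so what you have is a plausible plan (consistent with the literature the paper relies on) rather than a proof.
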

\begin{proof}
	It follows by \cite{ter1999integral} as explained in \cite[Corollary 3.2]{cordero2024hardy}. Indeed, even if in the latter the authors use $\xi_2\in A(\ker(B))$, the proof of Corollary 3.2 in \cite{cordero2024hardy} does not depend on the choice of the space complementing $R(B)$ in the direct sum decomposition of $\rd$. 
\end{proof}

\subsection{Properties of operators with diagonal kernels}\label{section24Luigi}
The following facts are well known in different contexts, for the benefit of the reader we collect and re-prove them with the notation of the Introduction. First, an operator $T$ with diagonal kernel maps $V(\rd)$ into $V(\rd)$. In fact, for a suitable $M$ and for arbitrary $N$, we have
\begin{equation}
	|Tf(x)|\leq \int_{\rd} |k(x,y)||f(y)|dy\leq C_N\int_{\rd} \la x-y\ra^{-N}\la y\ra^Mdy
\end{equation}
where here and in the sequel $C_N$ denotes different constants depending on $N$. By estimating 
\begin{equation}\label{Luigi241}
	\la x-y\ra^{-N}\leq C_N\la x\ra^{N}\la y\ra^{-N}, \qquad x,y\in\rd,
\end{equation}
we have
\begin{equation}
	|Tf(x)|\leq C_N\la x\ra^{N}\int_{\rd} \la y\ra^{-N+M}dy
\end{equation}
and, by choosing $N>M+d$, we have $Tf\in V(\rd)$.

Moreover, for the kernel $k$ of the product of operators $T_1$ and $T_2$ with diagonal kernels $k_1$ and $k_2$, we get the following estimates for $N>d$,
\begin{equation}
	|k(x,y)|\leq \int_{\rd} |k_1(x,z)||k_2(z,y)|dz\leq C_N\int_{\rd} \la x-z\ra^{-N}\la z-y\ra^{-N}dz\leq C_N\la x-y\ra^{-N}
\end{equation}
so that also $T_1T_2$ is an operator with diagonal kernel. Since convolution with Gaussians have diagonal kernels, this also shows that diagonality implies quasi-diagonality. 

Finally, let us prove the inclusion \eqref{LuigiIntro7} in the Introduction. Assume $x_0\notin A-\sing\supp(f)$, i.e., the estimates in Definition \ref{defLuigiintro2} are satisfied in the neighborhood $\Gamma$ of $x_0$. Take another conic neighborhood $\Gamma'$ with $\Gamma'\subset\subset \Gamma$ (we mean that the closure of $\Gamma'\cap\mathbb{S}^{d-1}$ is included in $\Gamma\cap\mathbb{S}^{d-1}$). For $x\in\Gamma'$ and $y\in\rd\setminus\Gamma$, we have
\begin{equation}
	\la x\ra+\la y\ra\leq C\la x-y\ra
\end{equation}
for some constant $C$, and then for every $N$
\begin{equation}\label{Luigi242}
	\la x-y\ra^{-2N}\leq C_N\la x\ra^{-N}\la y\ra^{-N}, \qquad x\in\Gamma',\quad y\in\rd\setminus\Gamma.
\end{equation}
We split 
\begin{equation}
	|Tf(x)|=I_1(x)+I_2(x),
\end{equation}
with
\begin{equation}
	I_1(x)=\left|\int_{\Gamma} k(x,y)f(y)dy\right|,\quad I_2(x)=\left|\;\int_{\rd\setminus\Gamma} k(x,y)f(y)dy\right|.
\end{equation}
We have, for every positive integers $P$ and $N$,
\[
	\la x\ra^PI_1(x)\leq C_{N,P}\int_{\Gamma}\la x\ra^P\la x-y\ra^{-P}\la y\ra^{-N}dy,
\]
by using the decay of $f$ in $\Gamma$ and the quasi-diagonality of $k$. According to \eqref{Luigi241}, we may estimate 
\begin{equation}
	\la x-y\ra^{-P}\leq C_P\la x\ra^{-P}\la y\ra^{P},
\end{equation}
and we conclude
\begin{equation}\label{Luigi243}
	\la x\ra^PI_1(x)\leq C_{N,P}\int_{\Gamma}\la y\ra^{-N+P}dy<\infty, \qquad x\in\rd,
\end{equation}
by choosing $N>P+d$. On the other hand, for $x\in\Gamma'$ and $y\in\rd\setminus\Gamma$, by using \eqref{Luigi242} to estimate the kernel, we have
\[
	\la x\ra^{P}I_2(x)\leq C_N\int_{\rd\setminus\Gamma}\la x\ra^{P-N}\la y\ra^{-N+M}dy<\infty,
\]
by choosing $N>\max\{P,M+d\}$. Hence, $x_0\notin A-\sing\supp(f)$, and \eqref{LuigiIntro7} is proved.

\subsection{Strategy of the proofs}
In the arguments of the subsequent sections, we will explicitly compute the kernel $k$ of the metaplectic operator $\hat S$ associated with the symplectic matrix $S$. The corresponding smoothed kernel $\tilde{k}$ will then be expressed in the form of a generalized Gaussian function. More precisely, we will show that it has the form
\begin{equation}\label{luigiRev21}
	|\tilde k(x,y)|=c\cdot e^{-\pi\mathcal{Q}_S(x,y)}, \qquad z\in\rdd,
\end{equation}
where $c>0$ and $\mathcal{Q}_S(x,y)$ is a positive semi-definite quadratic form. 
\begin{remark}\label{remPSD}
	Proving that $\mathcal{Q}_S$ is positive semi-definite is burdening. Fortunately, such a proof is not necessary. For, if $\mathcal{Q}_S$ had a negative eigenvalue, then $\mathcal{Q}_S(x,y)$ would exhibit exponential growth across the corresponding eigenspace. However, $\hat S$ maps $\mathcal{S}(\mathbb{R}^d)$ into itself, so that its kernel $k$ is a tempered distribution in $\mathcal{S}'(\mathbb{R}^{2d})$. As a result, the smoothed kernel $\tilde{k}$ defines a function of moderate growth, growing at most polynomially across every direction.
\end{remark}
Willing to discuss the asymptotic decay of $\tilde k$, we will be then led to the analysis of the manifold $\mathcal{Q}_S(x,y)=0$, let us fix the terminology as follows. 
\begin{definition}
	Given $S\in\Sp(d,\bR)$, considering $|\tilde k(x,y)|$ as in \eqref{luigiRev21}, we set
	\begin{equation}\label{luigiRev22}
		\Gamma_S=\{(x,y)\in\rdd, \; \mathcal{Q}(x,y)=0\}.
	\end{equation}
	We shall say that {\it  $\tilde k$ is localized on the manifold $\Gamma_S$}.
\end{definition}
With sloppy terms, we shall also say that $\tilde k$ {\it  decays off the manifold $\Gamma_S$}. In fact, when $\Gamma_S\subseteq\Delta$, then $\mathcal{Q}_S(x,y)>0$ for $x\neq y$, and we have
\begin{equation}\label{luigiRev23}
	\mathcal{Q}_S(x,y)\geq\varepsilon|x-y|^2, \qquad x,y\in\rd, \: x\neq y,
\end{equation}
for some $\varepsilon>0$. Hence, the quasi-diagonality estimate
\begin{equation}\label{luigiRev24}
	|\tilde k(x,y)|\leq C_N\la x-y\ra^{-N},
\end{equation}
follows with arbitrary $N$.

Note that our results in the sequel will identify also the cases where $\Gamma_S$ is not contained in the diagonal, see Theorem \ref{thmFinale} below. In this situation, the estimates \eqref{luigiRev24} should be replaced by more refined asymptotic bounds, allowing a generalization of the inclusion \eqref{LuigiIntro7}, and suggesting extensions of the definitions of diagonality and quasi-diagonality. This meets in a natural way the results of time-frequency analysis for Fourier integral operators, cf. \cite{cordero2024wigner,cordero2025wigner,cordero2020time}, and will be subject of future investigation.

\section{Smoothed kernels of metaplectic operators}\label{section3}

In this section, we study the quasi-diagonality of kernels of metaplectic operators. Preliminary investigations into their decay revealed an interplay between the blocks $C$ and $D$ in \eqref{blockS}. Specifically, in every test example we analyzed, either the invertibility of $C$ or the condition $D = I$ alone were sufficient for the smoothed kernel $\tilde k$ to exhibit quasi-diagonal decay. However, these conditions proved to be merely sufficient, the two blocks interacting in a more nuanced way in the general case. 

\begin{theorem}\label{thmFinale}
	Let $\hat S$ be a metaplectic operator with projection $S\in\Sp(d,\bR)$ having block decomposition \eqref{blockS}. Let $\tilde k$ be the smoothed kernel of $\hat S$, defined as in \eqref{regKernel}. Then, $\tilde k$ decays off the manifold
	\begin{equation}\label{defGamma}
		\Gamma_S=\{(x,D^Tx) : x\in R(C)^\perp\}.
	\end{equation}
\end{theorem}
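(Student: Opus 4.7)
The plan is to reduce $\tilde k$ to an explicit complex Gaussian and then analyze its quadratic form. First, using Fubini together with the identity $\hat S f(u) = \int k(u,v) f(v)\,dv$, I rewrite
\[
\tilde k(x,y) = \int k(u,v)\,\varphi(x-u)\varphi(y-v)\,du\,dv = [\hat S(T_y\varphi) \ast \varphi](x),
\]
where $T_y\varphi(v) = \varphi(v - y)$. By the intertwining relation \eqref{intertS}, since $T_y$ coincides up to a phase with $\rho(y,0;0)$, one has $\hat S\,T_y\varphi = c_y\, M_{Cy} T_{Ay}(\hat S\varphi)$ for some $c_y$ with $|c_y|=1$, whence $|\tilde k(x,y)| = \bigl|[M_{Cy} T_{Ay}(\hat S\varphi)] \ast \varphi(x)\bigr|$.

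The key input is that $\hat S$ preserves the class of Gaussians: from \eqref{TMO} combined with Theorem \ref{convGauss} (equivalently, via the action of the metaplectic group on the Siegel upper half space), one obtains $\hat S\varphi(t) = \gamma\, e^{-\pi Z t\cdot t}$ for some symmetric $Z \in \bC^{d\times d}$ with $R := \Re(Z)$ positive definite. Substituting $u = Ay + w$ and applying Theorem \ref{convGauss} a second time yields
\[
|\tilde k(x,y)| = C_0\,\exp\bigl(-\pi\, Q(x - Ay,\,Cy)\bigr),
\]
where $C_0 > 0$ depends only on $S$, and $Q(z,w) = (I - M_1)z\cdot z + 2 M_2 z\cdot w + M_1 w\cdot w$, with $M_1 + iM_2 := (Z+I)^{-1}$.

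The main technical step, which I anticipate as the principal obstacle, is verifying that $Q$ is positive definite on $\rd \times \rd$. Writing $Z = R + iN$ with $N := \Im(Z)$ symmetric, the identity $(M_1 + iM_2)[(R+I) + iN] = I$ yields $M_1^{-1} = (R+I) + N(R+I)^{-1}N$ and $M_2 = -M_1 N(R+I)^{-1}$. Since $M_1 > 0$, by the Schur complement criterion positive definiteness of $Q$ reduces to $(I-M_1) - M_2 M_1^{-1} M_2 > 0$. Using the relation $N(R+I)^{-1}N = M_1^{-1} - (R+I)$, a direct manipulation collapses the left-hand side to $(R+I)^{-1} R$, which is positive definite since $R > 0$.

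Finally, the pullback $\tilde Q(x,y) := Q(x-Ay,\,Cy)$ is positive semidefinite on $\rdd$ with zero set $\{(x,y):\,x=Ay,\,Cy=0\}$. Using the symplectic identities $AD^T = I + BC^T$ (from \eqref{GG6}), $A^TC = C^TA$ (from \eqref{GG1}), and $CD^T = DC^T$ (from \eqref{GG4}), this set coincides with $\Gamma_S$: if $(x,D^Tx) \in \Gamma_S$ with $C^Tx = 0$, then $AD^Tx = x + BC^Tx = x$ and $CD^Tx = DC^Tx = 0$, and conversely. A positive semidefinite quadratic form with kernel $\Gamma_S$ satisfies $\tilde Q(x,y) \geq c\,d((x,y),\Gamma_S)^2$ for some $c > 0$, so together with $|\tilde k(x,y)| \leq C_0 e^{-\pi\tilde Q(x,y)}$ this yields decay faster than any polynomial in $d((x,y),\Gamma_S)$.
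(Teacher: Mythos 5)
Your argument is correct, and it takes a genuinely different route from the paper. The paper proves Theorem \ref{thmFinale} by a three-way case analysis on $B$ ($B=0$, $B\in\GL(d,\bR)$, $1\le\rank(B)<d$), using the explicit kernel formulas of Theorem \ref{thmkern} (in the general case via the representation \eqref{TMO}, Moore--Penrose inverses and parametrizations of $R(B)$, $\ker(B)^\perp$), and then computing the kernel of the resulting real quadratic form block by block. You instead exploit the covariance \eqref{intertS}: writing $\tilde k(x,y)=\langle \hat S(\f(\cdot-y)),\f(x-\cdot)\rangle$ and using $\hat S\,T_y=c_y\,M_{Cy}T_{Ay}\hat S$ (since $S(y,0)=(Ay,Cy)$) reduces everything to the single fact that $\hat S\f=\gamma e^{-\pi Zt\cdot t}$ with $Z=Z^T$, $\Re(Z)>0$, followed by one application of Theorem \ref{convGauss}. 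I checked the delicate points: the Schur-complement computation is right — with $P=\Re(Z)+I$ one indeed gets $M_2M_1^{-1}M_2=P^{-1}NM_1NP^{-1}=P^{-1}-M_1$, so $(I-M_1)-M_2M_1^{-1}M_2=I-P^{-1}=(R+I)^{-1}R>0$, hence $Q>0$ — and the identification of $\{x=Ay,\ Cy=0\}$ with $\Gamma_S$ via \eqref{GG1}, \eqref{GG4}, \eqref{GG6} is correct. Your approach buys uniformity (no case distinction, no rank hypotheses on $B$), makes structurally transparent the paper's observation that $B$ plays no role (only $A$ and $C$ enter through $S(y,0)$, the rest being absorbed into $Z$), and yields the clean quantitative bound $|\tilde k(x,y)|\le C e^{-c\,\mathrm{dist}((x,y),\Gamma_S)^2}$; the paper's computation, by contrast, produces explicit closed forms for $|\tilde k|$ in each regime. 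Two points you should make explicit rather than cite in passing: first, a short proof that metaplectic operators preserve the class $\{\gamma e^{-\pi Zt\cdot t}:Z=Z^T,\ \Re(Z)>0\}$ (it suffices to check the generators of Example \ref{exampleMetap}, using Theorem \ref{convGauss} for $\cF$ together with $\Re(Z^{-1})=(Z^{-1})^*\Re(Z)Z^{-1}>0$); second, the distributional justification of the identity $\tilde k(x,y)=\bigl[\hat S(\f(\cdot-y))\ast\f\bigr](x)$, which is immediate from the kernel theorem since $\f(\cdot-y)\in\cS(\rd)$ and $\hat S:\cS(\rd)\to\cS(\rd)$, but should be stated.
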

We reserve Section \ref{section35GG} below for a brief discussion of $\Gamma_S$.\newline

Surprisingly, the block $B$, which appears in the integral representation of metaplectic operators, and demonstrated to be pivotal in various contexts such as Hardy's uncertainty principle or their boundedness on Lebesgue spaces, cf. \cite{cordero2024hardy,giacchi2024boundedness}, does not play any role in the off-diagonal decay of the smoothed kernel.


\subsection{Two sufficient conditions}\label{subsecSuffCond}
In our investigation, we began by examining metaplectic operators with $B = 0$ and $B \in \GL(d,\bR)$ in \eqref{blockS}. In both cases, if one of the following conditions was satisfied:
\begin{enumerate}[(i)] 
	\item $C$ is invertible, 
	\item $D = I$,
\end{enumerate}
then $k$ was quasi-diagonal. Here, we demonstrate that the kernel is quasi-diagonal under condition (i), regardless of the structure of $B$. We note that the following result is a straightforward corollary of Theorem \ref{thmFinale}. However, its proof does not require the detailed analysis used for Theorem \ref{thmFinale}. Specifically, we show that if $C\in GL(d,\bR)$, the smoothed kernel of $\hat S$ is in $\cS(\rdd)$. In this context, metaplectic operators whose associated symplectic projections have an invertible block $C$ play a role analogous to that of quadratic Fourier transforms in classical harmonic analysis, i.e., metaplectic operators with projections having the block $B$ invertible.

\begin{proposition}\label{Cinv}
If $C\in\GL(d,\bR)$, then the smoothed kernel $\tilde k$ is in $\cS(\rdd)$. In particular, $ k$ is quasi-diagonal.
\end{proposition}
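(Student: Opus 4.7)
My plan is to reinterpret the smoothed kernel $\tilde k$ as a Gabor-type matrix coefficient of $\hat S$ and exploit the symplectic covariance of the metaplectic representation. Since the Gaussian $\varphi$ separates as $\varphi(x, y) = e^{-\pi|x|^2} e^{-\pi|y|^2}$ and is even on $\rd$, a Fubini computation yields
\[
\tilde k(x, y) = \int_{\rdd} k(x', y')\,\varphi(x - x')\varphi(y - y')\, dx'\,dy' = \langle \hat S\,\rho(y, 0; 0)\varphi,\,\rho(x, 0; 0)\varphi\rangle_{L^2(\rd)},
\]
interpreted via the kernel distribution of $\hat S$. This is the key reformulation: it transforms a question about off-diagonal decay of $k$ into a question about how $\hat S$ transforms translates of the Gaussian.

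Next, I would apply the intertwining relation \eqref{intertS} at $(y, 0; 0)$, noting that $\rho(y, 0; 0)$ acts as translation by $y$ and $S(y, 0) = (Ay, Cy)$, to obtain $\hat S\,\rho(y, 0; 0)\varphi = \rho(Ay, Cy; 0)\,g$, where $g := \hat S\varphi$ belongs to $\cS(\rd)$ because $\hat S$ preserves the Schwartz class. Writing out the definition of $\rho$ and performing the substitution $s = t - Ay$ in the resulting integral gives
\[
\tilde k(x, y) = e^{i\pi Ay\cdot Cy}\, V_\varphi g(x - Ay,\, -Cy),
\]
where $V_\varphi g(u, \xi) := \int_{\rd} g(s)\varphi(s - u)e^{-2\pi i \xi\cdot s}\,ds$ is the short-time Fourier transform of $g$ with Gaussian window.

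Since both $g$ and $\varphi$ lie in $\cS(\rd)$, the STFT $V_\varphi g$ lies in $\cS(\rdd)$ by a standard integration-by-parts argument. The linear change of coordinates $\Phi(x, y) := (x - Ay,\, -Cy)$ on $\rdd$ has associated matrix $\begin{pmatrix} I & -A \\ 0 & -C \end{pmatrix}$ of determinant $(-1)^d\det C$, hence $\Phi$ is an isomorphism of $\rdd$ precisely when $C \in \GL(d, \bR)$, which is our hypothesis. Therefore $V_\varphi g \circ \Phi \in \cS(\rdd)$, and multiplying by the unimodular chirp $e^{i\pi Ay\cdot Cy}$ — whose partial derivatives grow only polynomially — keeps us inside $\cS(\rdd)$. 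This proves $\tilde k \in \cS(\rdd)$, and a fortiori the quasi-diagonality of $k$.

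The bulk of the work is the explicit computation identifying $\tilde k(x, y)$ with an STFT of $g$ composed with the linear map $\Phi$; the assumption $C \in \GL(d, \bR)$ is used only at the very last step, to secure invertibility of $\Phi$. There is no serious obstacle beyond careful bookkeeping of the phase factors arising from the Schr\"odinger representation, and these phases are harmless since only the modulus of $\tilde k$ determines decay and the chirp contributes at most polynomial growth upon differentiation.
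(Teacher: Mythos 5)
Your argument is correct, and it takes a genuinely different route from the paper's. The paper proves Proposition \ref{Cinv} by factorizing $S=J\cD_{C^{-1}}V_{-C^TA}JV_{-C^{-1}D}J$ (possible exactly because $C\in\GL(d,\bR)$), reading off the kernel as $|\det C|^{-1/2}$ times the Fourier transform of a chirp, and concluding by the convolution theorem: $\tilde k$ is, up to constant and phase, $\cF$ applied to a chirp times a Gaussian, hence Schwartz. You never compute $k$ at all: since the smoothing Gaussian factors as $\f\otimes\f$ and is even and real, $\tilde k(x,y)=\langle \hat S\,T_y\f,\,T_x\f\rangle$ is indeed the rigorous reading of $k\ast\f$ via the kernel theorem, and the covariance \eqref{intertS} applied at $(y,0)$ yields $\tilde k(x,y)=e^{i\pi Ay\cdot Cy}\,V_\f(\hat S\f)(x-Ay,\,-Cy)$; your phase bookkeeping checks out. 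Since $V_\f(\hat S\f)\in\cS(\rdd)$ and $(x,y)\mapsto(x-Ay,-Cy)$ is invertible precisely when $C\in\GL(d,\bR)$, you get $\tilde k\in\cS(\rdd)$, and the estimates \eqref{LuigiIntro5} follow because $\la x-y\ra\leq\sqrt{2}\,\la(x,y)\ra$. Your route buys several things: it avoids both the matrix factorization and the Gaussian integral of Theorem \ref{convGauss}, it isolates transparently the single point where invertibility of $C$ enters, and it is in essence a positions-only instance of the Gabor-matrix formula $|h|=|\psi(w-Sz)|$, $\psi=V_\f(\hat S\f)$, recalled in Section \ref{section4}; in fact, for arbitrary $C$ your identity already shows that $|\tilde k|$ decays rapidly off $\{(Ay,y):y\in\ker(C)\}$, which coincides with $\Gamma_S$ of Theorem \ref{thmFinale} (use $D^TA-B^TC=I$ and $C^TA=A^TC$). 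What the paper's route buys, in exchange, is the explicit kernel formula itself, which is what feeds the case analysis of Theorem \ref{thmkern} and the general proof of Theorem \ref{thmFinale}.
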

\begin{proof}
If $C$ is invertible, then
\begin{equation}\label{SJDVV}
	S=J\cD_{C^{-1}}V_{-C^TA}JV_{-C^{-1}D}J,
\end{equation}
where $J$ and the other matrices in \eqref{SJDVV} are defined as in \eqref{defJ} and \eqref{defVPDE}.

Using that the projection $\hat S\mapsto S$ is a homomorphism, a straightforward computation shows that, up to a phase factor,
\begin{equation}
	\hat Sf(x)=\int_{\rd}f(y)k(x,y)dy,\qquad f\in\cS(\rd),
\end{equation}
with kernel
\begin{equation}
	k(x,y)=|\det(C)|^{-1/2}\cF\Phi_{-\tilde S}(x,y),
\end{equation}
where
\begin{equation}
	\tilde S=\begin{pmatrix}
		AC^{-1} & C^{-T}\\
		C^{-1} & C^{-1}D
	\end{pmatrix}.
\end{equation}
Then, up to a phase factor,
\begin{align}
	\tilde k(x,y)&=|\det(C)|^{-1/2}\cF\Phi_{-\tilde S}\ast\f(x,y)=|\det(C)|^{-1/2}\cF(\Phi_{-\tilde S}\f)(x,y).
\end{align}

Since $\Phi_{-\tilde S}\f\in\cS(\rdd)$, we also have $\tilde k\in\cS(\rdd)$. This proves the assertion. 

\end{proof}

The sufficiency of condition (ii) relies on the same technical calculations as those in Theorem \ref{thmFinale}. For the sake of discussion, we present this consequence here in advance.
\begin{corollary}\label{DI}
	If $D=I$, then $ k$ is quasi-diagonal. More precisely, $\tilde k$ is localized on the manifold
	\begin{equation}\label{kerCGamma}
		\Gamma_S =\{(x,x) : x\in\ker(C)\}.
	\end{equation}
\end{corollary}
\begin{proof}
	It is a straightforward consequence of Theorem \ref{thmFinale}.	
\end{proof}


\subsection{The kernel of metaplectic operators}

Recall that we can decompose $\rd$ in terms of the blocks of $S$ either as:
\begin{equation}\label{Rd1}
	\rd=\ker(B)^\perp\oplus D^T(R(B)^\perp),
\end{equation}
or as:
\begin{equation}\label{Rd2}
	\rd=R(B)\oplus R(B)^\perp,
\end{equation}
see \cite[Lemma 3.4]{cordero2024hardy} applied to $S^{-1}$.

Accordingly, for the rest of this section, for $x,y\in\rd$, we write
\begin{equation}\label{x1x2}
	x=x_1+x_2\in R(B)\oplus R(B)^\perp
\end{equation}
and
\begin{equation}\label{y1y2}
	y=y_1+y_2\in\ker(B)^\perp\oplus D^T(R(B)^\perp).
\end{equation}

\begin{theorem}\label{thmkern}
Let $\hat S$ be a metaplectic operator with projection $S$ having blocks \eqref{blockS}. 
\begin{enumerate}[(i)]
\item If $B\in\GL(d,\bR)$,
\begin{equation}\label{kS3}
	k(x,y)=|\det(B)|^{-1/2}e^{i\pi DB^{-1}x\cdot x}e^{i\pi B^{-1}Ay\cdot y}e^{-2\pi i B^{-1}x\cdot y},
	\end{equation}
\item If $B=0$, 
\begin{equation}
	k(x,y)=|\det(D)|^{1/2}e^{i\pi CD^Tx\cdot x}\delta_{y=D^Tx}(y).
\end{equation}
\item In all the other cases
\begin{equation}\label{kS1}\begin{split}
	k(x,y)&=\mu_Se^{i\pi(DB^+x_1\cdot x_1+DC^Tx_2\cdot x_2)}\int_{\ker(B)^\perp}\delta_{y=z+D^Tx_2}(y)e^{i\pi B^+Az\cdot z}e^{-2\pi iz\cdot(B^+x_1-C^Tx_2)}dz,
		\end{split}
\end{equation}
as an oscillatory integral.
\end{enumerate}
All the identities above hold up to a phase factor.
\end{theorem}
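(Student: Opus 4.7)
The plan is to read all three identities off the integral representation \eqref{TMO}, treating the degenerate cases $B=0$ and $B\in\GL(d,\bR)$ separately and then recovering the generic identity by interpreting \eqref{TMO} as a pairing with a Dirac measure.

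For case (i), when $B\in\GL(d,\bR)$, the decompositions \eqref{Rd1}--\eqref{Rd2} collapse: $\ker(B)^\perp=R(B)=\rd$ and $R(B)^\perp=\{0\}$, so $x_2=y_2=0$, $x=x_1$, $y=y_1=t$, and $B^+=B^{-1}$. Direct substitution in \eqref{TMO} yields \eqref{kS3} with a still-undetermined $\mu_S$. To pin down $|\mu_S|=|\det B|^{-1/2}$, I would use the factorization $S=V_{DB^{-1}}\cD_{B^{-1}}JV_{B^{-1}A}$, whose validity follows from \eqref{GG5}--\eqref{GG6} by a direct block computation, and compose the corresponding metaplectic operators from Example~\ref{exampleMetap}; only the dilation $\mathfrak{T}_{B^{-1}}$ contributes a modulus, which equals $|\det B|^{-1/2}$. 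For case (ii), $B=0$ forces $D\in\GL(d,\bR)$ and $A=D^{-T}$ through \eqref{GG3}; setting $P:=D^{-1}C$, condition \eqref{GG1} yields $P^T=C^TA=A^TC=P$, so $S=\cD_{D^T}V_P$, and composing the corresponding metaplectic operators of Example~\ref{exampleMetap} gives $\hat Sf(x)=|\det D|^{1/2}e^{i\pi CD^Tx\cdot x}f(D^Tx)$, after rewriting $PD^T x\cdot D^Tx=DPD^Tx\cdot x=CD^Tx\cdot x$. This is precisely the action of the distributional kernel displayed in the statement.

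For case (iii), I would interpret \eqref{TMO} as a distributional pairing. Using the splitting \eqref{Rd1}, I write $y=y_1+y_2\in\ker(B)^\perp\oplus D^T(R(B)^\perp)$ as in \eqref{y1y2} and match the argument $t+D^Tx_2$ of $f$ in \eqref{TMO} with $y_1+y_2$: the integration variable $t\in\ker(B)^\perp$ becomes $y_1$, while the forced equality $y_2=D^Tx_2$ is encoded by the transverse Dirac $\delta_{y_2=D^Tx_2}(y_2)$. Rearranging the quadratic and linear exponents between the $x$- and $y$-variables then produces \eqref{kS1}; the symmetry of $DB^+$ on $R(B)$ (Corollary~\ref{corSymmetry}), of $B^+A$ on $\ker(B)^\perp$ (Lemma~\ref{lemmaSymmetry}), and of $DC^T$ globally (by \eqref{GG4}) guarantees that all quadratic forms in the statement are well defined.

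The main obstacle will be the measure-theoretic bookkeeping in case (iii). Since the direct sum $\rd=\ker(B)^\perp\oplus D^T(R(B)^\perp)$ is generically non-orthogonal, converting the intrinsic Lebesgue measure on $\ker(B)^\perp$ appearing in \eqref{TMO} into a factor of the ambient Lebesgue measure $dy$ on $\rd$ introduces a Jacobian, and correctly normalizing the transverse delta $\delta_{y_2=D^Tx_2}$ requires consistent orthonormal parametrizations of $\ker(B)^\perp$ and of $R(B)^\perp$, the latter transported by $D^T$. Since the theorem fixes neither the phase nor the modulus of $\mu_S$ in case (iii), this Jacobian and any additional phase can be absorbed into $\mu_S$ without affecting the statement.
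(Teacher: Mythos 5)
Your proposal is correct and follows essentially the same route as the paper: cases (i) and (ii) are the standard explicit formulas (which the paper simply quotes as well known, while you rederive them from the generator factorizations $S=V_{DB^{-1}}\cD_{B^{-1}}JV_{B^{-1}A}$ and $S=\cD_{D^T}V_P$ — both factorizations check out), and case (iii) is obtained, exactly as in the paper, by reading \eqref{TMO} as a pairing and encoding the constraint $y_2=D^Tx_2$ through a transverse Dirac on $D^T(R(B)^\perp)$. Your explicit remark that the Jacobian from the non-orthogonal splitting $\rd=\ker(B)^\perp\oplus D^T(R(B)^\perp)$ is absorbed into $\mu_S$ is a point the paper glosses over, and is handled correctly since the theorem leaves $\mu_S$ unspecified.
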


\begin{proof}
	It is well-known that if $B\in\GL(d,\bR)$, then
	\begin{align*}
		\hat Sf(x)=|\det(B)|^{-1/2}e^{i\pi DB^{-1}x\cdot x}\int_{\rd}f(y)e^{i\pi B^{-1}Ay\cdot y}e^{-2\pi i B^{-1}x\cdot y}dy, \qquad f\in\cS(\rd),
	\end{align*}
	and (i) follows trivially. Similarly, if $B=0$, we have
	\begin{align*}
		\hat Sf(x)=|\det(D)|^{1/2}e^{i\pi CD^Tx\cdot x}f(D^Tx), \qquad f\in\cS(\rd),
	\end{align*}
	i.e.,
	\begin{equation}\label{nucleoB0}
		\hat Sf(x)=|\det(D)|^{1/2}e^{i\pi CD^Tx\cdot x}\int_{\rd}f(y)\delta_{y=D^Tx}(y)dy,
	\end{equation}
	proving (ii) as well. To prove (iii) we use the integral expression of $\hat S$ \eqref{TMO}:
	\begin{align*}
		\hat Sf(x)&=\mu_Se^{i\pi(DB^+x_1\cdot x_1+DC^Tx_2\cdot x_2)}\int_{\ker(B)^\perp}f(y_1+D^Tx_2)e^{i\pi B^+Ay_1\cdot y_1}e^{-2\pi i(B^+x_1-C^Tx_2)\cdot y_1}dy_1\\
		&=\mu_Se^{i\pi(DB^+x_1\cdot x_1+DC^Tx_2\cdot x_2)}\\
		&\times\qquad \int_{\ker(B)^\perp}\int_{D^T(R(B)^\perp)}f(y_1+y_2)\delta_{y_2=D^Tx_2}(y_2)e^{i\pi B^+Ay_1\cdot y_1}e^{-2\pi i(B^+x_1-C^Tx_2)\cdot y_1}dy_1dy_2.
	\end{align*}
	By writing
	\begin{align}
		f(y)&=\int_{\rdd} f(z) e^{(y-z)\eta}dzd\eta,
	\end{align}
	we obtain
	\begin{align}
		\hat Sf(x)&=\int_{\rd}f(z)k(x,z)dz,
	\end{align}
	with $k$ given in \eqref{kS1}. This concludes the proof.
	
\end{proof}

\subsection{Proof of Theorem \ref{thmFinale}}\label{subsec:proof}
In view of Theorem \ref{thmkern}, to understand the general quasi-diagonal decay of the kernels of metaplectic operators we need to distinguish three cases: $B=0$, $B\in\GL(d,\bR)$ and $1\leq \rank(B)<d$ in \eqref{blockS}, respectively.
\subsubsection{Proof of the case $B\in\GL(d,\bR)$}
By \eqref{kS3} and Theorem \ref{convGauss}, for a suitable constant $c>0$,
	\begin{align}
		k\ast \f(x,y)&=c\cdot \f(x,y)\int_{\rd}e^{-\pi Q(u,v)\cdot(u,v)}e^{-2\pi i(u,v)\cdot(ix,iy)}dudv=c\cdot \det(M)^{-1/2}\f(x,y)e^{\pi Q^{-1}(x,y)\cdot(x,y)}\\
		&=c\cdot\det(M)^{-1/2}e^{-\pi (I-Q^{-1})(x,y)\cdot(x,y)},
	\end{align}
	where $Q$ is the complex symmetric matrix (with positive-definite real part)
	\begin{equation}
		Q=\begin{pmatrix}
			I-iDB^{-1} & iB^{-T}\\
			iB^{-1} & I-iB^{-1}A
		\end{pmatrix}=:I-iN.
	\end{equation}
	By \eqref{GG2}, $DB^{-1}$ is symmetric, and we see that $N$ can be factorized as
	\begin{equation}
			N=\underbrace{\begin{pmatrix}
				B^{-T} & 0\\
				0 & B^{-1}
			\end{pmatrix}}_{=:\Delta_B}\begin{pmatrix}
				D^T & -I\\
				-I & A
			\end{pmatrix}=\Delta_B\tilde S.
	\end{equation}
	By \eqref{invCmat},
	\begin{equation}
		Q^{-1}=(I+N^2)^{-1}+iN(I+N^2)^{-1},
	\end{equation}
	so that
	\begin{align}
		|\tilde k(x,y)|&=c \cdot e^{-\pi (I-(I+N^2)^{-1})(x,y)\cdot(x,y)}=e^{-\pi N^2(I+N^2)^{-1}(x,y)\cdot(x,y)}\\
		&=c\cdot e^{-\pi N(I+N^2)^{-1}N(x,y)\cdot(x,y)}=e^{-\pi (I+N^2)^{-1}\Delta_B\tilde S(x,y)\cdot \Delta_B\tilde S(x,y)}\\
		\label{eG1}
		&=c\cdot e^{-\pi \Delta_B^T(I+N^2)^{-1}\Delta_B\tilde S(x,y)\cdot\tilde S(x,y)},
	\end{align}
	where we used that $(I+N^2)^{-1}$ and $N$ are simultaneously diagonalizable, so they commute, and $\Delta_B\tilde S$ is trivially symmetric. Since $\Delta_B$ is invertible and $(I+N^2)^{-1}$ is positive-definite, the symmetric matrix $\Delta_B^T(I+N^2)^{-1}\Delta_B$ is also positive-definite. 

By \eqref{eG1}, $\tilde k$ decays off $\ker(\tilde S)$. It is straightforward to check that $(x,y)\in\ker(\tilde S)$ if and only if $x\in \ker(AD^T-I)$ and $y=D^Tx$. Since $AD^T-I=BC^T$ by \eqref{GG6}, and $B$ is invertible, 
	\begin{align}
		\ker(\tilde S)&=\{ (x,D^Tx)\in\rdd : x\in R(C)^\perp \}=\Gamma_S,
	\end{align}
	concluding the proof in this case.

\subsubsection{The case $B=0$}

	We use \eqref{nucleoB0}. For a suitable constant $c\in\bC$ with $|c|=1$,
	\begin{align}
		\tilde k(x,y)&=c\cdot|\det(D)|^{1/2}\int_{\rdd} \f(x-t,y-s)e^{i\pi CD^Ts\cdot s}\delta_{s=D^Tt}(s)dtds\\
		&=c\cdot|\det(D)|^{1/2}\f(x,y)\int_{\rdd} e^{-\pi|t|^2}e^{-\pi|s|^2}e^{2\pi tx}e^{2\pi sy}e^{i\pi CD^Tt\cdot t}\delta_{s=D^Tt}(s)dtds\\
		&=c\cdot|\det(D)|^{1/2}\f(x,y)\int_{\rd} e^{-\pi|t|^2}e^{-\pi|D^Tt|^2}e^{2\pi tx}e^{2\pi D^Tty}e^{i\pi CD^Tt\cdot t}dt\\
		&=c\cdot|\det(D)|^{1/2}\f(x,y)\int_{\rd} e^{-\pi(I+DD^T-iCD^T)t\cdot t}e^{-2\pi i(ix+iDy)t}dt\\
		&=c\cdot|\det(D)|^{1/2}\f(x,y)e^{\pi(I+DD^T-iCD^T)^{-1}(x+Dy)(x+Dy)}.
	\end{align}
	By \eqref{invCmat}, the real part of $(I+DD^T-iCD^T)^{-1}$ is
	\begin{align}
		Q&=(I+DD^T+CD^T(I+DD^T)^{-1}CD^T)^{-1}\\
		\label{middle}
		&=(I+DD^T+CD^T(I+DD^T)^{-1}DC^T)^{-1}.
	\end{align}
	Consequently,
	\begin{align}
		|\tilde k(x,y)|&=|\det(D)|^{1/2}\f(x,y)e^{\pi Q(x+Dy)(x+Dy) }=|\det(D)|^{1/2} e^{-\pi \tilde S(x,y)\cdot(x,y)},
	\end{align}
	where
	\begin{align}
		\tilde S&=\begin{pmatrix}
		I-Q & -QD\\
		-D^TQ & I-D^TQD
		\end{pmatrix},
	\end{align}
	which is positive semi-definite by Remark \ref{remPSD}. Clearly, $(x,y)\in\ker(\tilde S)$ if and only if
	\begin{align}
		\begin{cases}
			(I-Q)x-QDy=0,\\
			-D^TQx+(I-D^TQD)y=0.
		\end{cases}
	\end{align}
	Since both $Q$ and $D$ are invertible, the system above is equivalent to
	\begin{equation}\label{middle2}
		\begin{cases}
			y=D^{-1}Q^{-1}(I-Q)x,\\
			(D^{-1}Q^{-1}-D^{-1}-D^T)x=0.
		\end{cases}
	\end{equation}
	By using \eqref{middle} in the second equation in \eqref{middle2}, we have
	\begin{align}
	&(D^{-1}(I+DD^T+CD^T(I+DD^T)^{-1}DC^T)-D^{-1}-D^T)x=0\\
	&\Leftrightarrow(D^{-1}CD^T(I+DD^T)^{-1}DC^T)x=0\\
	&\Leftrightarrow(CD^T(I+DD^T)^{-1}DC^T)x=0\\
	&\Leftrightarrow C(I+D^{-1}D^{-T})^{-1}C^Tx=0.
	\end{align}
	The last equation is equivalent to $x\in\ker((I+D^{-1}D^{-T})^{-1/2}C^T)$, that is $x\in \ker(C^T)$. Hence, $(x,y)\in\ker(\tilde S)$ if and only if
	\begin{align}
		\begin{cases}
			Dy=(Q^{-1}-I)x,\\
			x\in\ker(C^T),
		\end{cases}
	\end{align}
	that is
	\begin{align}
		\begin{cases}
			Dy=(DD^T+CD^T(I+DD^T)^{-1}DC^T)x=DD^Tx,\\
			x\in R(C)^\perp.
		\end{cases}
	\end{align}
	In conclusion,
	\begin{equation}
		\ker(\tilde S)=\{(x,D^Tx):x\in R(C)^\perp\}=\Gamma_S,
	\end{equation}
	and we are done.
	

\begin{remark}
	The cases $B\in\GL(d,\bR)$ and $B=0$ together entail Proposition \ref{PropLuigiSq14} in the Introduction as a corollary, completely characterizing quasi-diagonal kernels of linear canonical transforms.
\end{remark}

\subsubsection{The general case}
We first prove the assertion under the simplifying assumption $D^T(R(B)^\perp)=\ker(B)$. By writing $u=u_1+u_2\in R(B)\oplus {R(B)^\perp}$ and $v=v_1+v_2\in \ker(B)^\perp\oplus {D^T(R(B)^\perp)}=\ker(B)^\perp\oplus \ker(B)$, it follows by \eqref{kS1} that
	\begin{align*}
		\tilde k(x,y)&=k\ast\f(x,y)=\int_{\rdd}k(u,v)\f(x-u)\f(y-v)dudv\\
		&=\mu_S\int_{\rd}\int_{\ker(B)^\perp}e^{i\pi B^+u_1\cdot u_1+DC^T u_2\cdot u_2}e^{i\pi B^+Az\cdot z}e^{-2\pi iz(B^+u_1-C^Tu_2)}\\
		&\qquad\qquad\times \f(x-u)\f(y_1-z)\f(y_2-D^Tu_2)dzdu,
	\end{align*}
where the simplifying assumption is used to factorize the Gaussians.	
	Since the dependence of integrand on $u_1$ and $u_1$ tensorizes, we can write
	\begin{equation}\label{eq22}\begin{split}
		\tilde k(x,y)&=\mu_S\iiint e^{i\pi(DB^+u_1\cdot u_1+DC^Tu_2\cdot u_2)}e^{i\pi B^+Az\cdot z}e^{-2\pi i(B^+u_1-C^Tu_2)\cdot z}\\
		&\qquad\qquad\times\f(x_1-u_1)\f(x_2-u_2)\f(y_1-z)\f(y_2-D^Tu_2)du_1du_2dz,
	\end{split}\end{equation}
	where the convergence of the integral is due to the Gaussians, and the integrals are taken over $R(B)$, $R(B)^\perp$ and $\ker(B)^\perp$. Let $V_1:\bR^r\to R(B)$, $V_2:\bR^{d-r}\to R(B)^\perp$ and $W:\bR^r\to\ker(B)^\perp$ be parametrizations so that $V_1^TV_1=I$, $V_2^TV_2=I$ and $W^TW=I$. 
	Then,
		\begin{equation}\label{eq23}\begin{split}
		\tilde k(x,y)&=\mu_S\f(x_1,x_2,y_1,y_2)\iiint e^{i\pi(V_1^TDB^+V_1r\cdot r+V_2^TDC^TV_2s\cdot s + W^TB^+AWt\cdot t)}\\
		&\qquad\qquad\times e^{-2\pi i(W^TB^+V_1r-W^TC^TV_2s)\cdot t} e^{-\pi V_1^TV_1r\cdot r}e^{-\pi V_2^TV_2s\cdot s}e^{-\pi W^TWt\cdot t}\\
		&\qquad\qquad\times e^{-\pi V_2^TDD^TV_2 s\cdot s}e^{2\pi V_1^Tx_1\cdot r}e^{2\pi V_2^T(x_2+Dy_2)\cdot s}e^{2\pi W^Ty_1\cdot t}drdsdt.
	\end{split}\end{equation}
	
	The integral in equation \eqref{eq23} can be written as 
	\begin{align*}
		\iiint e^{-\pi(G-iL)(r,t,s)\cdot(r,t,s)}e^{-2\pi i\zeta\cdot(r,t,s)}drdtds,
	\end{align*}
	where
	\begin{align}
		G-iL&=\left(\begin{array}{cc|c}
			I & 0 & 0 \\
			0 & I & 0 \\
			\hline
			0 & 0 & I+V_2^TDD^TV_2
		\end{array}\right)-i\left(\begin{array}{cc|c}
			V_1^TDB^+V_1 & {-}V_1^T(B^+)^TW & 0 \\
			{-}W^TB^+V_1 & W^TB^+AW &  W^TC^TV_2 \\
			\hline
			0 & V_2^TCW & V_2^TDC^TV_2
		\end{array}\right)
		\end{align}
		and
	\begin{equation}
		\zeta=i\begin{pmatrix}
			V_1^Tx_1 \\
			W^Ty_1\\
			V_2^T(x_2+Dy_2)
		\end{pmatrix}.
	\end{equation}
	By Lemma \ref{lemmaSymmetry} and Corollary \ref{corSymmetry}, $W^TB^+AW$ and $V_1^TDB^+V_1$ are symmetric. Moreover, we recall that $D^T:R(B)^\perp \to D^T(R(B)^\perp))$ is bijective, as a consequence of \cite[Lemma 3.4]{cordero2024hardy}. The mapping $D^TV_2$ is therefore a parametrization for $D^T(R(B)^\perp)$, and consequently $V_2^TDD^TV_2$ is positive-definite. Hence, we can apply Theorem \ref{convGauss}, yielding
	\begin{equation}
		\tilde k(x,y)=c\cdot\f(x,y)e^{\pi(G-iL)^{-1}(V_1^Tx_1,W^Ty_1,V_2^T(x_2+Dy_2))(V_1^Tx_1,W^Ty_1,V_2^T(x_2+Dy_2))},
	\end{equation}
	and consequently, by \eqref{invCmat},
	\begin{align}
		|\tilde k(x,y)|&=|c|\f(x,y)e^{\pi\Re((G-iL)^{-1})(V_1^Tx_1,W^Ty_1,V_2^T(x_2+Dy_2))(V_1^Tx_1,W^Ty_1,V_2^T(x_2+Dy_2))}\\
		\label{GoBackHere}
		&=|c|\f(x,y)e^{\pi(G+LG^{-1}L)^{-1}(V_1^Tx_1,W^Ty_1,V_2^T(x_2+Dy_2))(V_1^Tx_1,W^Ty_1,V_2^T(x_2+Dy_2))}\\
		&=|c|e^{-\pi \tilde S(V_1^Tx_1,W^Ty_1 , V_2^Tx_2, V_2^TDy_2)\cdot (V_1^Tx_1,W^Ty_1 , V_2^Tx_2, V_2^TDy_2)},
	\end{align}	
	with
	\begin{equation}
		\tilde S=
		\left(\begin{array}{cc|cc}
			I & 0 & 0 & 0\\
			0 & I & 0 & 0\\
			\hline
			0 & 0 & I & 0\\
			0 & 0 & 0 & E^{-1}
		\end{array}\right)
		-
			\left( \begin{array}{cc|c}
			I & 0 & 0 \\
			0 & I & 0 \\
			\hline
			0 & 0 & I \\
			0 & 0 & I 
		\end{array}\right)(G+LG^{-1}L)^{-1} \left(\begin{array}{cc|cc}
			I & 0 & 0 & 0\\
			0 & I & 0 & 0\\
			\hline
			0 & 0 & I & I
		\end{array}\right),
	\end{equation}
	where we set $E=V_2^TDD^TV_2$. Observe that here we used that $y_2=(D^TV_2)(D^TV_2)^+y_2$ and, since $D^TV_2$ has full column rank, $(D^TV_2)^+=(V_2^TDD^TV_2)^{-1}V_2^TD$. Again, the matrix $\tilde S$ is positive semi-definite by Remark \ref{remPSD}.
	
	We need to compute the kernel of $\tilde S$. Let us write
	\begin{equation}\label{blocksR-1}
		(G+LG^{-1}L)^{-1}=\begin{pmatrix}
			\Xi & b\\
			b^T & \delta
		\end{pmatrix},
	\end{equation}
	where $\Xi$, $b$ and $\delta$ are suitable matrices, that we leave implicit. We can compute 
	\begin{align}
		\left( \begin{array}{cc|c}
			I & 0 & 0 \\
			0 & I & 0 \\
			\hline
			0 & 0 & I \\
			0 & 0 & I 
		\end{array}\right)(G+LG^{-1}L)^{-1} \left(\begin{array}{cc|cc}
			I & 0 & 0 & 0\\
			0 & I & 0 & 0\\
			\hline
			0 & 0 & I & I
		\end{array}\right)&=
		\left(\begin{array}{c|cc}
			\Xi & b & b\\
			\hline
			b^T & \delta & \delta\\
			b^T & E^{-1}\delta & E^{-1}\delta
		\end{array}\right)=\left(\begin{array}{c|cc}
			\Xi & b & b\\
			\hline
			b^T & \delta & \delta\\
			b^T & \delta & \delta
		\end{array}\right)
	\end{align}
	and
	\begin{equation}
		\tilde S
		=
		\left(\begin{array}{c|cc}
			I-\Xi & -b & -b\\
			\hline
			-b^T & I-\delta & -\delta\\
			-b^T & -\delta & E^{-1}-\delta
		\end{array}\right),
	\end{equation}
	where we used that $\tilde S$ must be symmetric to simplify the matrices above.
	
	Let $X=(V_1^Tx_1,W^Ty_1)$, $Y=V_2^TDy_2$ and $Z=V_2^Tx_2$. The equations corresponding to $\ker(\tilde S)$ are
	\begin{equation}
		\begin{cases}
			(I-\Xi)X-b(Y+Z)=0,\\
			-b^TX+(I-\delta)Z-\delta Y=0,\\
			-b^TX - \delta Z+(E^{-1}-\delta) Y=0.
		\end{cases}
	\end{equation}
	By subtracting the second equation to the third, we find
	\begin{equation}\label{systemOfEqu}
		Z=E^{-1}Y,
	\end{equation}
	that is
	\begin{equation}
		V_2^Tx_2=(V_2^TDD^TV_2)^{-1}V_2^TDy_2=(D^TV_2)^+y_2
	\end{equation}
	or, equivalently,
	\begin{equation}\label{y2DTx2}
		y_2=D^Tx_2.
	\end{equation}
	Consequently, $\ker(\tilde S) \subseteq \mathfrak{M} = \{(x,y) : y_2 = D^T x_2\}$. Determining $\ker(\tilde S)$ explicitly requires the full expressions of the blocks in \eqref{blocksR-1}, which can lead to an unnecessarily intricate computation. Instead, we leverage \eqref{y2DTx2}, substituting it into \eqref{GoBackHere} to analyze the behavior of $|\tilde k|$ on $\mathfrak{M}$ and characterize the manifold where $|\tilde k|$ does not decay. By \eqref{GoBackHere}, with $y_2=D^Tx_2$, 
	\begin{equation}\label{togethere}
		|k(x,y)|=|c|e^{-\pi N(V_1^Tx_1,W^Ty_1,(I+V_2^TDD^TV_2)V_2^Tx_2)\cdot(V_1^Tx_1,W^Ty_1,(I+V_2^TDD^TV_2)V_2^Tx_2)},
	\end{equation}
	where we used that $V_2^T(I+DD^T)x_2=(I+V_2^TDD^TV_2)V_2^Tx_2$, and defined
	\begin{align}
		N&=G^{-1}
		-(G+LG^{-1}L)^{-1}=(G+LG^{-1}L)^{-1}\left((G+LG^{-1}L)G^{-1}
		-I\right)=(G+LG^{-1}L)^{-1}LG^{-1}L
		G^{-1}.
	\end{align}	
	The kernel of $N$ coincides with the kernel of $LG^{-1}LG^{-1}$. We need to solve
	\begin{equation}
		LG^{-1}LG^{-1}\begin{pmatrix}
			V_1^Tx_1\\
			W^Ty_1\\
			(I+V_2^TDD^TV_2)V_2^Tx_2
		\end{pmatrix}=0,
	\end{equation}
	that is
	\begin{equation}
		LG^{-1}L \begin{pmatrix}
			V_1^Tx_1\\
			W^Ty_1\\
			V_2^Tx_2
		\end{pmatrix}=0,
	\end{equation}
	or, equivalently, $(V_1^Tx_1,W^Ty_1,V_2^Tx_2)\in \ker(G^{-1/2}L)=\ker(L)$, i.e,
	\begin{equation}
		\begin{pmatrix}
			V_1^T(B^+)^T & 0 & 0\\
			0 & W^T & 0\\
			0 & 0 & V_2^T
		\end{pmatrix}
		\begin{pmatrix}
			D^T & -I & 0\\
			-B^+ & B^+A & C^T\\
			0 & C & DC^T
		\end{pmatrix}\begin{pmatrix}
			x_1\\
			y_1\\
			x_2
		\end{pmatrix}=0.
	\end{equation}
	Explicitly,
	\begin{equation}\label{systemGG1}
		\begin{cases}
			y_1-D^Tx_1\in \ker(B),\\
			-B^+x_1+B^+Ay_1+C^Tx_2\in\ker(B),\\
			Cy_1+DC^Tx_2\in R(B).
		\end{cases}
	\end{equation} 
	By \cite[Table B1]{cordero2024hardy}, we extrapolate $D^T(R(B))\subseteq \ker(B)^\perp$, and consequently $y_1-D^Tx_1\in\ker(B)^\perp$, which entails
	\begin{equation}\label{y1DTx1}
		y_1=D^Tx_1.
	\end{equation}
	By plugging \eqref{y1DTx1} in the remaining equations of \eqref{systemGG1}, and using \eqref{GG6}, we have
	\begin{equation}\label{systemGG2}
		\begin{cases}
			B^+BC^Tx_1+C^Tx_2\in\ker(B),\\
			CD^Tx_1+DC^Tx_2\in R(B).
		\end{cases}
	\end{equation}
	Since $BB^+$ is the identity on $R(B)$, the first equation of \eqref{systemGG2} is equivalent to
	\begin{equation}
		BC^Tx=0,
	\end{equation}
	that is $C^Tx\in\ker(B)$. On the other hand, the second equation of \eqref{systemGG2} is equivalent to $DC^Tx\in R(B)$. However, $D$ is an isomorphism between $\ker(B)$ and $R(B)^\perp$, forcing $DC^Tx=0$, i.e., $C^Tx\in\ker(D)$. By \eqref{GG3}, if $z\in\ker(B)\cap\ker(D)$, then $0=A^TDz-C^TBz=z$. Consequently, $BC^Tx=DC^Tx=0$ implies $C^Tx=0$.
	
	Moreover, recall that $D^T:R(B)\oplus R(B)^\perp\to \ker(B)^\perp\oplus D^T(R(B)^\perp)$ and it is bijective on each corresponding component of the two direct products. Therefore, $y_j=D^Tx_j$, for $j=1,2$, if and only if $y=D^Tx$. In synthesis, 
	
	\begin{equation}
		\ker(\tilde S)=\{(x,D^Tx):C^Tx=0\}=\Gamma_S,
	\end{equation} 
	concluding the proof of Theorem \ref{thmFinale} in the simplified case. In general, if $D^T(R(B)^\perp)\neq \ker(B)$, then we may write $S=\cD_PS'\cD_Q$, where $P,Q$ are orthogonal, the first $d-r$ columns of $Q$ form an orthononormal basis of $\ker(B)$ and the first $d-r$ rows of $P$ form an orthonormal basis of $R(B)^\perp$. Then, if $S$ has blocks as in \eqref{blockS},
	\begin{equation}
		S'=\begin{pmatrix}
			PAQ & PBQ\\
			PCQ & PDQ
		\end{pmatrix}
	\end{equation}
	satisfies the simplifying assumption. Let us delve into the details, let $\{u_1,\ldots,u_{d-r}\}$ be an orthonormal basis of $\ker(B)$ and $\{v_1,\ldots,v_{d-r}\}$ be an orthonormal basis of $R(B)^\perp$, where $d-r=d-rank(B)$. Let $Q$ be any orthogonal matrix so that its first $d-r$ columns are the vectors $u_j$'s ($j=1,\ldots,d-r$). Let $P$ be any orthogonal matrix so that the first $d-r$ rows of $P$ are the vectors $v_j$'s ($j=1,\ldots,d-r$). Then,
	\begin{equation}
		\ker(PBQ)=\mbox{span}\{e_1,\ldots,e_j\}_{j=1}^{d-r},
	\end{equation}
	being $e_j$ the $j$-th element of the canonical basis of $\rd$. Moreover,
	\begin{equation}
		R(PBQ)^\perp=\ker(Q^TB^TP^T)=\mbox{span}\{e_1,\ldots,e_j\}_{j=1}^{d-r}.
	\end{equation}
	Since $D^T$ is an isomorphism between the two spaces, and they coincide, we must have $D^T(R(B)^\perp)=\ker(B)$. From the metaplectic perspective,
	\begin{equation}
		\hat S=\mathfrak{T}_{P}\hat S' \mathfrak{T}_{Q}.
	\end{equation}
	The composition law of kernels gives that
	\begin{equation}
		k_{\mathfrak{T}_{P}\hat S'}(x,y)=\int_{\rd} k_{\mathfrak{T}_{P}}(x,z)k_{\hat S'}(z,y)dz=\int_{\rd}\delta_{z=P x}(z)k_{\hat S'}(z,y)dz=k_{\hat S'}(Px,y),
	\end{equation}
	and therefore, by using again the composition law, we find:
	\begin{equation}
		k_{S}(x,y)=k_{\mathfrak{T}_{P}\hat S'\mathfrak{T}_{Q}}(x,y)=k_{S'}(Px,Q^Ty).
	\end{equation}
	Then, 
	\begin{align}
		\tilde k_S\ast \f(x,y)&=\int_{\rdd}k_{S'}(Pu,Q^Tv) \f(x-u,y-v)dudv=\int_{\rdd}k_{S'}(u',v') \f(x-P^Tu',y-Qv')du'dv'\\
		&=\int_{\rdd}k_{S'}(u',v') \f(Px-u,Q^Ty-v)dudv=\tilde k_{S'}(Px,Q^Ty).
	\end{align}
	Since 
	\begin{equation}
		Px\in R(PCQ)^\perp=\ker(Q^TC^TP^T)
	\end{equation}
	if and only if
	\begin{equation}
		Q^TC^TP^T(Px)=0 \quad \text{if and only if} \quad C^Tx=0,
	\end{equation}
	we see that $\tilde k_{S'}(Px,Q^Ty)$ is localized on
	\begin{equation}
		\{Q^Ty=Q^TD^TP^T(Px) : Px\in R(PCQ)^\perp\}=\{y=D^T x : x\in R(C)^\perp\},
	\end{equation}
	and we are done.
	
\subsection{A counterexample in dimension $>1$} Proposition \ref{PropLuigiSq14} establishes that if $d=1$, $\Gamma_S\subseteq\Delta=\{x=y\}$ if and only if $C\in\bR\setminus\{0\}$ of $D=1$. The question arises whether this characterization extends to dimensions $d>1$, that is to say that the sufficient conditions of Section \ref{subsecSuffCond} are also necessary. The answer is negative, and the counterexample in dimension $d=2$ is straightforward: consider the symplectic interchange
\[
	\Pi_2=\begin{pmatrix}
		1 & 0 & 0 & 0\\
		0 & 0 & 0 & 1\\
		0 & 0 & 1 & 0\\
		0 & -1 & 0 & 0
	\end{pmatrix},
\]
that is the projection of the partial Fourier transform with respect to the frequency variable:
\begin{equation}
	\cF_2f(x,\xi)=\int_{-\infty}^\infty f(x,t)e^{-2\pi i\xi t}dt, \qquad f\in\cS(\bR^2).
\end{equation}
It is easy to see that $R(C)^\perp=\mbox{span}\{(1,0)\}$, with $D^Tx=x$ for every $x\in R(C)^\perp$, and $\Gamma_{\Pi_2}$ is strictly contained in $\Delta$.

\subsection{Comments on $\Gamma_S$ ($d>1$)}\label{section35GG}
Let us discuss briefly the manifold $\Gamma_S$ in dimension $d>1$. The example in the previous section illustrates that characterizing the symplectic matrices for which
\begin{equation}\label{inclusion}
	\Gamma_S\subseteq\Delta=\{x=y\}
\end{equation}
can be challenging if $d>1$, as $R(C)^\perp\cap \{D^Tx=x\}$ may be a proper subspace of both $R(C)^\perp$ and $\{D^Tx=x\}$. However, some observations can still be made. The matrix $D^T$ acts as an isomorphism between $R(C)^\perp$ and $\ker(C)$, cf. \cite{cordero2024hardy,ter1999integral}. In geometric terms, $\Gamma_S\subseteq R(C)^\perp\times \ker(C)$, and the manifold $\Gamma_S$ is the graph of $y=D^Tx$ as a function on $R(C)^\perp$. 

This fact, along with straightforward considerations, leads to the following proposition, which provides necessary conditions for the inclusion \eqref{inclusion} to hold. Recall that if $C\in\GL(d,\bR)$, then $\Gamma_S\subseteq\Delta$ by Proposition \ref{Cinv}, and there is nothing to say.

\begin{proposition}
	Assume $C\notin\GL(d,\bR)$. The following statements are equivalent.
	\begin{enumerate}[(i)]
		\item $\Gamma_S\subseteq\{x=y\}$.
		\item $D^T|_{R(C)^\perp}=\id_{R(C)^\perp}$.
		\item $\lambda=1$ is an eigenvalue of $D^T$, and $R(C)^\perp$ is contained in the corresponding eigenspace.
	\end{enumerate}
	Importantly, if (i) holds, then $\ker(C)=\ker(C^T)$.
\end{proposition}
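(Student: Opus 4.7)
The proposal is to prove the three equivalences by a short cycle and then extract the final assertion from the characterization in (ii) together with the known isomorphism $D^T\colon R(C)^\perp\to\ker(C)$ cited from \cite{cordero2024hardy,ter1999integral}.

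First, I would unpack (i). By the definition $\Gamma_S=\{(x,D^Tx):x\in R(C)^\perp\}$, the inclusion $\Gamma_S\subseteq\{x=y\}$ is literally the assertion that $D^Tx=x$ for every $x\in R(C)^\perp$, i.e.\ that $D^T|_{R(C)^\perp}=\id_{R(C)^\perp}$. This gives (i)$\Leftrightarrow$(ii) with no additional work.

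Next, I would handle (ii)$\Leftrightarrow$(iii). The implication (iii)$\Rightarrow$(ii) is immediate: if $R(C)^\perp$ lies in the $1$-eigenspace of $D^T$, then $D^Tx=x$ for every $x\in R(C)^\perp$. For (ii)$\Rightarrow$(iii), the one point to check is that $1$ is actually an eigenvalue, and here is where the hypothesis $C\notin\GL(d,\bR)$ enters: since $C$ is square and non-invertible, $\dim R(C)<d$, so $R(C)^\perp\neq\{0\}$, and any nonzero $x\in R(C)^\perp$ is a genuine eigenvector of $D^T$ with eigenvalue $1$.

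Finally, I would prove the supplementary statement. Assuming (i), by the equivalence we already have that $\lambda=1$ is an eigenvalue of $D^T$. For the identity $\ker(C)=\ker(C^T)$, I would invoke that $D^T$ is an isomorphism between $R(C)^\perp$ and $\ker(C)$ (the fact recalled before the statement from \cite{cordero2024hardy,ter1999integral}). Combined with (ii), which says $D^T$ acts as the identity on $R(C)^\perp$, this forces $\ker(C)=D^T(R(C)^\perp)=R(C)^\perp=\ker(C^T)$, where the last equality is the standard identity from linear algebra. Thus both conclusions follow.

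The main potential pitfall is conflating the inclusion $R(C)^\perp\subseteq\ker(C)$ (which would come directly from $D^Tx=x$ and $D^Tx\in\ker(C)$) with equality; but the isomorphism provided by \cite{cordero2024hardy,ter1999integral} promotes this to an equality at once, so no dimension count is needed. No other step is genuinely technical.
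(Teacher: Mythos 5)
Your proof is correct and follows exactly the route the paper intends: the equivalences are read off directly from the definition of $\Gamma_S$ (with $C\notin\GL(d,\bR)$ guaranteeing $R(C)^\perp\neq\{0\}$, so $\lambda=1$ is genuinely an eigenvalue), and the final assertion $\ker(C)=\ker(C^T)$ comes from combining $D^T|_{R(C)^\perp}=\id$ with the cited isomorphism $D^T\colon R(C)^\perp\to\ker(C)$ and the standard identity $R(C)^\perp=\ker(C^T)$. The paper leaves these as ``straightforward considerations,'' and your write-up supplies precisely those details, so nothing is missing.
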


\begin{remark}
	Even if the characterization provided by Proposition \ref{PropLuigiSq14} fails in dimension $>1$, we can still discuss geometrically the possible scenario in dimension $d=2$.
	\begin{enumerate}[(i)]
		\item If $C\in\GL(2,\bR)$, then $R(C)^\perp=\{0\}$. In this case, the kernel $k$ is quasi-diagonal, see Proposition \ref{Cinv} above.
		\item If $C=0$, then $R(C)^\perp=\bR^2$ and $\{y=D^Tx\}$ is a line contained in $R(C)^\perp$. In this case, quasi-diagonality is equivalent to $D=I$.
		\item If $\rank(C)=1$, then $R(C)^\perp=\ker(C^T)$ and $D^T(R(C)^\perp)=\ker(C)$ are lines.
		\begin{itemize}
			\item[(iii.1)] If $\ker(C)\cap \ker(C^T)=\{0\}$, then $\Gamma_S\cap\Delta=\{0\}$, which is strictly contained in $\Gamma_S$, and the kernel is not quasi-diagonal.
			\item[(iii.2)] If $\ker(C)=\ker(C^T)$, then $D^T|_{R(C)^\perp}$ is a dilation and quasi-diagonality is again equivalent to $D^T|_{R(C)^\perp}=\id|_{R(C)^\perp}$.
		\end{itemize}
	\end{enumerate}
\end{remark}

\section{Examples from time-frequency analysis}\label{section4}
We finally give some examples showing direct connections with problems from time-frequency analysis. To be definite, let us recall some definitions \cite{cordero2020time,grochenig2013foundations,hormander1,hormander2,hormander3}. For $f\in\cS'(\rd)$ and $g\in\cS(\rd)$, the \emph{short-time Fourier transform} is defined by
\begin{equation}\label{RodinoL1}
	V_gf(x,\xi)=\int_{\rd}f(t)\overline{g(t-x)}e^{-2\pi it\cdot \xi}dt, \qquad x,\xi\in\rd.
\end{equation}
Fix $g,\gamma\in\cS(\rd)\setminus\{0\}$. The \emph{Gabor matrix} of a linear continuous operator $T:\cS(\rd)\to\cS'(\rd)$ is defined to be 
\begin{equation}\label{RodinoL2}
	h(z,w)=\la T\pi(z)g,\pi(w)\gamma\ra, \qquad z=(x,\xi),w=(y,\eta)\in\rdd
\end{equation}
with $\pi(z)f(t)=e^{2\pi i t\cdot \xi}f(t-x)$, $t\in\rd$. The (cross-)Wigner distribution is defined by
\begin{equation}\label{RodinoL3}
	W(g,\gamma)(x,\xi)=\int_{\rd}g(x+t/2)\overline{\gamma(x-t/2)}e^{-2\pi it\cdot \xi}dt, \qquad x,\xi\in\rd.
\end{equation}
The meaning of \eqref{RodinoL3} extends to $g,\gamma\in\cS'(\rd)$ and for $f\in\cS'(\rd)$ one defines $Wf=W(f,f)$. The Wigner operator $K$ associated to $T$ is defined by 
\begin{equation}\label{RodinoL4}
	W(Tg,T\gamma)=K(W(g,\gamma))
\end{equation}
and its integral kernel $k$ is the Wigner kernel of $T$, \cite{cordero2022wigner,cordero2024understanding,cordero2024wigner}. We have $K:\cS(\rdd)\to\cS'(\rdd)$. The following identity links the Gabor matrix $h$ in \eqref{RodinoL2} and the Wigner kernel $k$, see \cite{cordero2024WGM}:
\begin{equation}\label{RodinoL5}
	|h|^2=k\ast (W\gamma\otimes Wg).
\end{equation}
Taking $g=\gamma=\f$, $\f(t)=e^{-\pi |t|^2}$, we have $W\f(x,\xi)=2^{d/2}e^{-2\pi(|x|^2+|\xi|^2)}$ and hence
\begin{equation}\label{RodinoL6}
	|h(z,w)|^2=|\la T\pi(z)\f ,\pi(w)\f\ra|^2=(k\ast\Phi)(z,w), \qquad z,w\in\rdd,
\end{equation}
with
\begin{equation}\label{RodinoL7}
	\Phi(z,w)=2^de^{-2\pi(|z|^2+|w|^2)}.
\end{equation}

So, independently of the different normalization of the Gaussian in \eqref{regKernel}, we have that the Wigner kernel is quasi-diagonal if and only if the Gabor matrix is diagonal. When $T$ is a pseudo-differential operator with symbol in the class $S^0_{0,0}(\rdd)$ of H\"ormander, its Wigner kernel is quasi-diagonal, hence from \eqref{RodinoL6}, we recapture the diagonality of the Gabor matrix. The A -- singular support in Definition \ref{defLuigiintro2} coincides with the Gabor wave front set. The following two examples involve metaplectic operators.

\begin{example}
	Let $S\in\Sp(d,\bR)$ be a given symplectic matrix and $\hat S$ the corresponding metaplectic operator, defined up to a phase factor. The Wigner kernel $k$ of the corresponding Wigner operator $K$ is given by
	\begin{equation}\label{RodinoL8}
		k(z,w)=\delta_{z-Sw},\qquad z,w\in\rdd,
	\end{equation}
	in view of the Wigner covariance, see for example \cite[Proposition 1.3.7]{cordero2020time}. We may regard $K$ as the metaplectic operator in dimension $2d$ associated to the symplectic matrix
	\begin{equation}\label{RodinoL9}
		\begin{pmatrix}
			S & 0\\
			0 & S^{-T}
		\end{pmatrix}.
	\end{equation}
By convolving the kernel \eqref{RodinoL8} with the Gaussian, we obtain the decay estimates
\begin{equation}\label{RodinoL10}
	|\tilde k(z,w)|=|(k\ast\Phi)(z,w)|\leq C_N\la w-Sz \ra^{-N},
\end{equation}
see \eqref{argumentGG} for the case $d=1$.
Similar estimates are satisfied by the Gabor matrix $h$, since $\tilde k=|h|^2$ in view of \eqref{RodinoL6}. We then recapture formula (6.95) in \cite{cordero2020time}, where $|h|$ was explicitly computed as
\begin{equation}\label{RodinoL11}
	|h|=|\psi(w-Sz)|
\end{equation}
with
\begin{equation}\label{RodinoL12}
	\psi=V_\f(\hat S\f)\in\cS(\rdd).
\end{equation}
\end{example}

\begin{example}
	The following Fourier multipliers appear in the study of the Wigner kernel of Fourier integral operators
	\begin{equation}\label{RodinoL13}
		\sigma(D)f(x)=\int_{\rd}e^{2\pi ix\cdot \xi}\sigma(\xi)\hat f(\xi)d\xi, \qquad x\in\rd,
	\end{equation}
	with $\sigma\in\mathcal{O}_M(\rd)$, the class of the slowly increasing functions of Schwartz \cite{schwartztheorie}, i.e. for every $\alpha\in\bN^d$,
	\begin{equation}\label{RodinoL14}
		|D^\alpha\sigma(\xi)|\leq C_\alpha\la\xi\ra^{N_{\alpha}}, \qquad \xi\in\rd,
	\end{equation}
	for suitable positive constants $C_\alpha$, $N_\alpha$, depending on $\alpha$. Note that the multiplication by $\sigma$ maps $\cS(\rd)$ to $\cS(\rd)$ and $\cS'(\rd)$ to $\cS'(\rd)$, hence $\sigma(D):\cS(\rd) \to \cS(\rd)$, $\cS'(\rd) \to \cS'(\rd)$. As basic examples of functions in $\mathcal{O}_M(\rd)$ we quote
	\begin{equation}\label{RodinoL15}
		\sigma(\xi)=e^{i\pi P_m(\xi)},
	\end{equation}	
	where $P_m(\xi)$ is a polynomial with real coefficients, of any order $m\in\bN$. For $m=2$, we retrieve in \eqref{RodinoL13} the metaplectic operators $\hat S$ corresponding to a symplectic matrix with blocks \eqref{blockS} having $A=D=I$ and $C=0$. One can write $\sigma(D)$ as a convolution operator with kernel
	\begin{equation}\label{RodinoL16}
		k(x,y)=\tau(x-y), \qquad \tau=\cF^{-1}(\sigma).
	\end{equation}
	If $\lambda\in\cS(\rd)$, in particular if $\lambda$ is a Gaussian, we have
	\begin{equation}\label{RodinoL17}
		\psi=\tau\ast\lambda=\cF^{-1}\cF(\tau\ast\lambda)=\cF^{-1}(\sigma\hat\lambda)\in\cS(\rd).
	\end{equation}
	According to the terminology of Schwartz \cite[Pages 244--245 and Theorem XV page 268]{schwartztheorie} we have that $\tau$ belongs to $\mathcal{O}'_c(\rd)$, the space of the rapidly decreasing distributions, which map by convolution $\cS(\rd)$ to $\cS(\rd)$ and $\cS'(\rd)$ to $\cS'(\rd)$. Note that $\tau$ is not in general a $\mathcal{C}^\infty$ function, even if $\sigma$ is of the form \eqref{RodinoL15} with $P(\xi)$ of order 2, cf. \cite[Lemma 4.4]{cordero2024unified}. The kernel $k(x,y)$ in \eqref{RodinoL16} is quasi-diagonal. In fact, by computing $\tilde k(x,y)=(k\ast \f)(x,y)$ in \eqref{regKernel}, we have
		\begin{equation}\label{RodinoL18}
		\tilde k(x,y)=\int_{\rd}\tau(\xi-\eta)e^{-\pi|x-\xi|^2}e^{-\pi|y-\eta|^2}d\xi d\eta.
	\end{equation}
	By the change of variables $\xi=\tilde\xi+\tilde\eta+y$, $\eta=\tilde\eta+y$, and application of the standard identity for convolution of Gaussians, we obtain
	\begin{equation}\label{RodinoL19}
	\begin{split}
		\tilde k(x,y)&=2^{-d/2}\int_{\rd}\tau(\tilde\xi)e^{-\pi|x-y-\tilde\xi-\tilde \eta|^2}e^{-\pi|\tilde\eta|^2}d\tilde\xi d\tilde\eta\\
		&=2^{-d/2}\int_{\rd}\tau(\tilde\xi)e^{-\pi|x-y-\tilde\xi|^2}d\tilde\xi=\psi(x-y)
	\end{split}
	\end{equation}
	where $\psi\in\cS(\rd)$ is given by \eqref{RodinoL17} with
	\begin{equation}\label{RodinoL20}
	\lambda(t)=2^{-d/2}e^{-\pi|t|^2/2}, \qquad t\in\rd.
	\end{equation}
	Hence, the estimates \eqref{LuigiIntro5} follow for $\tilde k$, and $k$ is quasi-diagonal. In the case where the order of $P_m(x)$ in \eqref{RodinoL15} is 2, we retrieve part of the result in Proposition \ref{Cinv}. For some explicit computations in the case $m=3$, we address \cite{cordero2025wigner}.
\end{example}

\section*{Acknowledgments}
The authors have been supported by the Gruppo Nazionale per l'Analisi Matematica, la Probabilità e le loro Applicazioni (GNAMPA) of the Istituto Nazionale di Alta Matematica (INdAM). Gianluca Giacchi has been supported by the SNSF starting grant ``Multiresolution methods for unstructured data” (TMSGI2 211684). We also thank the reviewers for their valuable comments and insightful suggestions, which have helped improve the clarity and rigor of the manuscript. 



%

%

\subsection*{Conflict of interest}

The authors declare no potential conflict of interests.

%
%
%



%

\end{document}